\newtheorem{theorem}{Theorem}
\newtheorem{corollary}{Corollary}
\begin{document}

\title{Explicit Bernstein type inequalities for wavelet coefficients in $L_p(\R^n)$}

\author[rvt]{Susanna Spektor \corref{cor2}}
\ead{sspektor@math.ualberta.ca}

\author[rvt]{Xiaosheng Zhuang\corref{cor1}}
\ead{xzhuang@math.ualberta.ca}
\address[rvt]{Department of Mathematical and Statistical Sciences,
University of Alberta, Edmonton, Alberta, Canada T6G 2G1. }

%

%

\makeatletter \@addtoreset{equation}{section} \makeatother

\begin{abstract}In this paper, we investigate the wavelet coefficients  for  function spaces $\mathcal{A}_k^p:=\{f:
\|(\iu \omega)^k\wh{f}(\omega)\|_p\le 1\}, k\in\N, p\in(1,\infty)$
using an important quantity $C_{k,p}(\psi)$. In particular,
Bernstein type inequalities associated with wavelets are
established. We obtained a sharp inequality of Bernstein type for
splines, which induces a lower bound for the quantity
$C_{k,p}(\psi)$ with $\psi$ being the semiorthogonal spline
wavelets. We also study  the asymptotic behavior of wavelet
coefficients for both the family of Daubechies orthonormal wavelets
and the family of semiorthogonal spline wavelets. Comparison of
these two families is done by using the quantity $C_{k,p}(\psi)$.
\end{abstract}

\begin{keyword}
 wavelet coefficients\sep asymptotic estimation\sep Bernstein type inequality\sep Daubechies
 orthonomal wavelets \sep semiorthogonal  spline wavelets
\MSC[2000]{42C40, 41A05, 42C15, 65T60}
\end{keyword}


\maketitle \pagenumbering{arabic}


\section{Introduction and Motivations}

We say that $\varphi:\R\rightarrow \C$ is a \emph{$2$-refinable
function } if
\begin{equation}\label{refeq}
\varphi=2\sum_{\nu\in\Z} a(\nu)\varphi(2 \cdot-\nu),
\end{equation}
where $a:\Z\mapsto \C$ is a finitely supported
 sequence of complex numbers on $\Z$, called the \emph{low-pass
filter (or mask)} for $\varphi$. In frequency domain, the refinement
equation in \eqref{refeq} can be rewritten as
\begin{equation}\label{refeq:freq}
\wh\varphi(2 \omega) = \wh{a}(\omega)\wh\varphi(\omega), \quad
\omega\in\R,
\end{equation}
where $\wh{a}$ is the \emph{Fourier series} of $a$ given by
\begin{equation}\label{Fseq}
\wh{a}(\omega):=\sum_{\nu\in\Z}a(\nu)e^{-\iu \nu\omega}, \quad
\omega\in\R.
\end{equation}
The Fourier transform $\wh{f}$ of $f\in L_1(\R)$ is defined to be
$\wh{f}(\omega)=\frac{1}{\sqrt{2\pi}}\int_{\R} f(x) e^{-\iu x\omega}
dx$ and can be extended to square integrable functions and tempered
distributions.
%
%

Usually, a wavelet system is generated by some wavelet function
$\psi$ from a $2$-refinable function vector $\varphi$ as follows:
\begin{equation}\label{waveletGenerators}
\psi=2\sum_{\nu\in\Z}b(\nu)\varphi(2\cdot-\nu)\quad\mbox{or}\quad\wh{\psi}(2\omega)
=\wh b(\omega)\wh\varphi(\omega),
\end{equation}
where  $b:\Z\mapsto\C$ is a finitely supported sequence of complex
numbers on $\Z$, called the \emph{high-pass filter (or mask)} for
$\psi$.

Many wavelet applications, for example, image/signal compression,
are based on investigation of the wavelet coefficients $\la
f,\varphi_{j,\nu}\ra$ and  $\la f, \psi_{j,\nu}\ra$ for
$j,\nu\in\Z$, where $\la f,g\ra:=\int_\R f(x)\overline{g(x)}dx$ and
$\varphi_{j,\nu}:=2^{j/2}\varphi(2^j\cdot-\nu),\psi_{j,\nu}:=2^{j/2}\psi(2^j\cdot-\nu)$.
 The magnitude of the wavelet coefficients depends on both the
smoothness of the function $f$ and the wavelet $\psi$. In this
paper, we investigate the quantity
\begin{equation}\label{def:Ckp}
C_{k,p}(\psi)=\sup_{f\in \mathcal{A}_k^{p'}}\frac{|\la
f,\psi\ra|}{\|\wh\psi\|_p},
\end{equation}
where $1< p,p'< \infty$, $1/p'+1/p=1$, $k\in\N$, and
$\mathcal{A}_k^{p'}:=\{f\in L_{p'}(\R): \|(\iu
\omega)^k\wh{f}(\omega)\|_{p'}\le 1\}$.
 The classical Bernstein inequality states that for any
$\alpha\in\N^n$, one have $\|\partial^\alpha f\|_p\le
R^{|\alpha|}\|f\|_p$, where $f\in L_p(\R^n)$ in an arbitrary
function whose Fourier transform $\wh f(\omega)$ is supported in the
ball $|\omega|\le R$. The quantity $C_{k,p}(\psi)$ in
\eqref{def:Ckp} is the best possible constant in the following
Bernstein type inequality
\begin{equation}\label{BernsteinTypeIneq}
|\la f, \psi_{j,\nu}\ra|\le
C_{k,p}(\psi)2^{-j(k+1/p-1/2)}\|\psi\|_p\|(\iu \omega)^k \wh
f(\omega)\|_{p'}
\end{equation}
Such type of inequalities plays an important role in wavelet
algorithms for the numerical solution of integral equations (cf.
\cite{Beylkin.Coifman.Rokhlin:1991,Unser:1997}) where wavelet
coefficients arise by applying an integral operator to a wavelet and
bound of the type \eqref{BernsteinTypeIneq} gives  priori
information on the size of the wavelet coefficients.

Note that
\begin{equation}\label{def:Ckp2}
C_{k,p}(\psi)=\sup_{f\in \mathcal{A}_k^{p'}}\frac{|\la
f,\psi\ra|}{\|\wh\psi\|_p}=\sup_{f\in \mathcal{A}_k^{p'}}\frac{|\la
\wh{f},\wh{\psi}\ra|}{\|\wh\psi\|_p}=\frac{\|\wh{_k\psi}\|_p}{\|\wh{\psi}\|_p},
\end{equation}
where for a function $f\in L_1(\R)$, $_kf$ is defined to be the
function such that
\begin{equation}\label{def:kf}
\wh{_kf}(\omega)=(\iu\omega)^{-k}\wh f(\omega).
\end{equation}
For $\psi$ that is
compactly supported, it is easily shown that the quantity
$C_{k,p}(\psi)<\infty$ is euivalent to
\begin{equation}\label{def:vanishingMoment}
\int_\R \psi(x)x^\nu dx = 0 \quad\mbox{or}\quad
\frac{d^\nu}{dx^\nu}\wh\psi(0):={\wh\psi}^{(\nu)}(0)=0
\end{equation}
for $\nu =0,\ldots,m-1$. That is, $\psi$ has \emph{$m$ vanishing
moments}. Consequently, for a wavelet $\psi$ with $m$ vanishing
moments, we can investigate the magnitude of the wavelet
coefficients in the function spaces
$\mathcal{A}_1^{p'},\ldots,\mathcal{A}_m^{p'}$ for $1<p'<\infty$
using the quantity $C_{k,p}(\psi)$.

 A fundamental question in wavelet application is which wavelet one
should be choose for a specific purpose. In \cite{Keinert:1994},
Keinert used a constant $G_M$ in the following approximation for
comparison of wavelets.
\begin{equation}\label{def:GM}
\int_\R f(x)\psi_{j,\nu}(x)dx\approx
2^{-(j+1)(M+1/2)}\frac{G_M}{M!}f^{(M)}(2^{-j}\nu),
\end{equation}
where $f$ is sufficient smooth, $\psi$ has exactly $M$ vanishing
moments, and $G_M$ depends only on $\psi$. Keinert presented
numerical values of $G_M$ for some commonly used wavelets and
provided constructions for wavelets with short support an  minimal
$G_M$, which lead to better compression in practical calculation. By
considering the quantity $C_{k,p}(\psi)$, the ``$\approx$'' in
\eqref{def:GM} can be replaced by precise inequality. In
\cite{Ehrich:2000}, Ehrich investigate the quantity $C_{k,p}(\psi)$
for $p=2$ and for  two important families of wavelets. Precise
asymptotic relations of quantities $C_{k,2}(\psi)$ are established
in \cite{Ehrich:2000} showing that the quantity for the family of
semiorthogonal spline wavelts are generally smaller than that for
the family of Daubechies orthonormal wavelets.

%

 In this paper, we shall investigate the quantity $C_{k,p}(\psi)$ mainly  for
 the family of
Daubechies orthonormal wavelets (see \cite{Daub:book}) and the
family of semiorthogonal spline wavelets (see
\cite{Chui.Wang:1992}). Let $m$ be a positive integer. The
Daubechies orthonormal wavelet $\psi^D_m$ of order $m$ with mask
$b^D_m$  and its $2$-refinable function $\varphi^D_m$ with mask
$a^D_m$ are determined by
\begin{equation}\label{def:DaubWave}
\begin{aligned}
|\wh{a^D_m}(\omega)|^2&=\cos^{2m}(\omega/2)\sum_{\nu=0}^{m-1}{m-1+\nu
\choose \nu}\sin^{2\nu}(\omega/2),\\
\wh{b^D_m}(\omega)&=e^{-\iu\omega/2}\overline{\wh{a^D_m}(\omega/2+\pi)},
\end{aligned}
\end{equation}
while the simiorthogonal spline wavelet $\psi^S_m$ of order $m$ is
given by
\begin{equation}\label{def:SplineWave}
\begin{aligned}
\psi^S_m(x)=\sum_{\nu=0}^{2m-2}\frac{(-1)^\nu}{2^{m-1}}N_{2m}(\nu+1)N_{2m}^{(m)}(2x-\nu),
\quad x\in\R,
\end{aligned}
\end{equation}
where $N_m$ is the B-spline of order $m$. That is,
\begin{equation}\label{def:Bspline}
N_m(x) = \frac{1}{(m-1)!}\sum_{\nu=0}^{m}(-1)^\nu{m\choose
\nu}(x-\nu)^{m-1}_+,
\end{equation}
or equivalently,  $\wh{N_m}(\omega)=\frac{1}{\sqrt{2\pi}}(e^{-\iu
\omega/2}\frac{\sin(\omega/2)}{\omega/2})^m$. Here for $k\ge 1$,
\[
(y)_+^k=
\begin{cases}
y^k & y>0,\\
0,  &y\le0,
\end{cases}
\quad\mbox{and}\quad (y)_+^0=
\begin{cases}
1 & y>0,\\
\frac12,  &y=0,\\
0,& y<0.
\end{cases}
\]
Note that $\psi^S_m$ is generated from the $2$-refinable function
$\varphi^S_m:=N_m$  via \eqref{waveletGenerators} by some mask
$b^S_m$ (cf. \cite{Chui.Wang:1992}).

These two families are widely used in many applications. For
example, see
\cite{Beylkin.Coifman.Rokhlin:1991,Petersdorff.Schwab:1996,Rathsfeld:1995,Unser:1997}
for their applications on numerical solution of PDE and signal/image
processing.  Both of the Daubechies orthonormal wavelet $\psi^D_m$
and the semiorthogonal spline wavelet $\psi^S_m$ have vanishing
moments of order $m$ and support length $2m-1$. The Daubechies
orthonormal wavelet $\psi^D_m$ generates an orthonormal basis
$\{2^{j/2}\psi^D_m(2^j\cdot-\nu): j,\nu\in\Z\}$ for $L_2(\R)$ (see
\cite{Daub:book}). However, the wavelet function $\psi^D_m$ is
implicitly defined and the coefficients in the mask for $\psi^D_m$
are not rational numbers. In fact,
\begin{equation}\label{def:DaubPsi}
\wh{\psi^D_m}(\omega)=\frac{1}{\sqrt{2\pi}}\wh{a^D_m}(\omega/2+\pi)\prod_{\ell=1}^\infty\wh{a^D_m}(2^{-\ell}\omega)
\end{equation}
and $a^D_m$ is obtained from \eqref{def:DaubWave} via Riesz lemma.
The semiorthogonal spline wavelets generated by $\psi^S_m$ are not
orthogonal in the same level $j$. Yet they are orthogonal on
different levels. And more importantly, the semiorthogonal spline
wavelet $\psi^S_m$ is explicit defined and the coefficients for its
mask are indeed rational numbers, which is a very much desirable
property in the implementation of fast wavelet algorithms. We shall
 see that these two families significantly differ with respect
to the magnitude of their wavelet coefficients in terms of
$C_{k,p}(\psi^D_m)$ and $C_{k,p}(\psi^S_m)$.

The structure of this paper is  as follows. In Section~2, for $k,
m\in\N$ fixed and $p\in(1,\infty)$, we shall investigate the
quantity $C_{k,p}(\psi^S_m)$ in the Bernstein type inequality in
\eqref{BernsteinTypeIneq} for the familiy of semiorthogonal spline
wavelets. In Section~3, we shall establish results on the asymptotic
behaviors ($m\rightarrow\infty$) of the quantities
$C_{k,p}(\varphi)$ and $C_{k,p}(\psi)$ for both the refinable
function $\phi$ and wavelet function $\psi$ and for both the two
families of wavelets. Finally, we shall generalize our results to
high-dimensional wavelets in Section~4.

%
%


\section{Bernstein Type Inequalities for Splines }
In this section, we shall first establish a result of the Bernstein
type inequality for splines and then present an upper bound for the
quantity $C_{k,p}(\psi^S_m)$. Throughout this paper, $p\in \R$
always denotes a constant such that $p\in(1,\infty)$.

Before we introducing our results, we need some notation and
definitions.

A function $s(x)$ is called a spline of order $m$ of minimal defect
with nodes $lh, h>0, l \in \Z$, if
\begin{itemize}
\item[{\rm(1)}] $s(x)$ is a polynomial with real coefficients of the degree
$<m$ at each interval $(h(l-1), hl)$, $l \in \Z$;

\item[{\rm(2)}] $s(x) \in C^{m-2}(\R).$
\end{itemize}
The collection of all such splines is denoted by $S_{m,h}$. It is
well known that any spline $s \in S_{m,h}$ can be  uniquely
represented by
\begin{equation}\label{def:splines}
s(x)= \sum_{\nu \in \Z}c_{\nu} N_{m}(x-h\nu).
\end{equation}
Here $N_m$ is the B-spline of order $m$. It is well known that
\begin{equation}\label{eq:NmProperty1}
N_m'(x)=N_{m-1}(x)-N_{m-1}(x-1)\quad\mbox{for}\quad m\ge 2.
\end{equation}
and
\begin{equation}\label{eq:NmProperty2}
\sum_{k= -\infty}^\infty|\wh{N_m}(\omega+2 \pi k)|^2=
\sum_{k=-m+1}^{m-1}N_{2m}(m+k)e^{-ik \omega}.
\end{equation}

The following result  provides an exact upper bound in the Bernstein
type inequality for any spline $s\in S_{m,h}$, which gives
estimation of $k$th derivative of non-periodic spline in $L_p(\R)$
by $L_p(\R)$ norm of the spline $s$ itself (also cf.
\cite{Babenko.Spektor:2008} for a special case $p=2$).

\begin{theorem}\label{thm1:spline}
Let $k,m\in\N$, $k<m$, and $h\in\Z$. Let $p\in(1,\infty)$. Then, for
any function $s \in S_{m,h}$ such that $\wh{s}\in L_p(\R)$, the
following sharp inequality holds:
\begin{equation}\label{eq:Cksplin}
\displaystyle{\|\wh{s^{(k)}}\|_p\leq \left(\pi h\right)^k
\left({\frac{K_{2(m-k)+1}}{K_{2m+1}}}\right)^{1/2}\|\wh{s}\|_p},
\end{equation}
where
$\displaystyle{K_j=\frac{4}{\pi}\sum_{\ell=0}^{\infty}\frac{(-1)^{\ell(j+1)}}{(1+2\ell)^{j+1}}}$,
$j=0,1,2,...$ are the Favard's constants.
\end{theorem}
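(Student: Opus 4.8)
The plan is to move everything to the Fourier side, where the inequality \eqref{eq:Cksplin} becomes a weighted estimate for the multiplier $(\iu\omega)^k$ acting on the (closed) subspace of $L_p(\R)$ consisting of Fourier transforms of splines. Since $\wh{s^{(k)}}(\omega)=(\iu\omega)^k\wh{s}(\omega)$, and since a dilation $x\mapsto x/h$ reduces the general knot spacing to $h=1$ while supplying the factor $h^k$ (the remaining $\pi^k$ will emerge in the frequency evaluation below), I would first use the B-spline representation \eqref{def:splines} to factor
\[
\wh{s}(\omega)=\wh{N_m}(\omega)\,\tau(\omega),\qquad \tau(\omega)=\sum_{\nu\in\Z}c_\nu e^{-\iu\nu\omega},
\]
where $\tau$ is $2\pi$-periodic. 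The first key step is the exact frequency-domain form of the derivative rule \eqref{eq:NmProperty1}, namely
\[
(\iu\omega)^k\wh{N_m}(\omega)=(1-e^{-\iu\omega})^k\,\wh{N_{m-k}}(\omega),
\]
which follows at once from $\wh{N_m}(\omega)=(2\pi)^{-1/2}\bigl(e^{-\iu\omega/2}\sin(\omega/2)/(\omega/2)\bigr)^m$. This identity shows that $s^{(k)}$ is itself a spline of order $m-k$, and it yields the factorization $\wh{s^{(k)}}(\omega)=\wh{N_{m-k}}(\omega)\,\eta(\omega)$ with $\eta(\omega)=(1-e^{-\iu\omega})^k\tau(\omega)$ again periodic.

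For the case $p=2$ (the setting of \cite{Babenko.Spektor:2008}) I would then periodize by Plancherel: pulling the $2\pi$-periodic factors out and summing over the shifts $\omega\mapsto\omega+2\pi k'$ gives
\[
\frac{\|\wh{s^{(k)}}\|_2^2}{\|\wh{s}\|_2^2}
=\frac{\int_0^{2\pi}|\tau(\omega)|^2\,(2\sin(\omega/2))^{2k}\,\Phi_{m-k}(\omega)\,d\omega}
{\int_0^{2\pi}|\tau(\omega)|^2\,\Phi_{m}(\omega)\,d\omega},
\qquad \Phi_j(\omega):=\sum_{k'\in\Z}|\wh{N_j}(\omega+2\pi k')|^2,
\]
where the autocorrelation symbols $\Phi_j$ are exactly the Euler--Frobenius sums of \eqref{eq:NmProperty2}. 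Bounding the ratio by $\sup_\omega (2\sin(\omega/2))^{2k}\Phi_{m-k}(\omega)/\Phi_m(\omega)$ and showing the supremum is attained at the endpoint $\omega=\pi$, one evaluates $\Phi_j(\pi)=\tfrac1{2\pi}\sum_{k'}(\pi(2k'+1)/2)^{-2j}$, which by the defining series collapses onto the Favard constants, producing precisely $\pi^{2k}K_{2(m-k)+1}/K_{2m+1}$. Establishing that the maximizing frequency is indeed $\omega=\pi$ is the delicate monotonicity point already in the $L_2$ case.

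The main obstacle is the passage to general $p\in(1,\infty)$ with the \emph{same} ($p$-independent) constant, because $p$-th powers do not periodize and the multiplier $(\iu\omega)^k$ is unbounded on $L_p(\R)$, so neither Plancherel nor a crude multiplier bound survives. My proposed route is to realize the map $\wh{s}\mapsto\wh{s^{(k)}}$ on the spline subspace as convolution against a fixed finite (signed) kernel on the frequency line whose total variation equals $(\pi h)^k\bigl(K_{2(m-k)+1}/K_{2m+1}\bigr)^{1/2}$, after which Young's inequality delivers the uniform $L_p$ estimate for every $p$; the $p$-independence of the constant then reflects the universality of the underlying Euler-spline extremizer. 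An alternative is to reduce, via the same periodization structure, to the known sharp Bernstein inequality for periodic splines, whose constant is independent of the metric. Either way, the crux is the construction of that extremal kernel and the verification that its mass is exactly the Favard ratio, together with the endpoint argument identifying $\omega=\pi$ as extremal; these are the steps I expect to absorb most of the work, while the reduction and the $p=2$ computation are essentially bookkeeping built on \eqref{eq:NmProperty1}, \eqref{eq:NmProperty2}, and \eqref{def:Bspline}.
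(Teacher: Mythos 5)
Your $p=2$ skeleton is essentially the paper's argument: factor $\wh{s}=\tau\,\wh{N_m}$ with $\tau$ periodic, use the derivative recursion to write $\wh{s^{(k)}}=(1-e^{-\iu\omega})^k\tau\,\wh{N_{m-k}}$, periodize, reduce to the supremum of a quotient of Euler--Frobenius symbols, locate the maximum at $\omega=\pi$ via the interlacing of the roots, and read off Favard constants. But the step you single out as the real obstacle --- the passage to general $p$ --- rests on a misconception, and the route you propose there is both unexecuted and unlikely to work. The claim that ``$p$-th powers do not periodize'' is false in this setting: the theorem compares $\|\wh{s^{(k)}}\|_p$ with $\|\wh{s}\|_p$, i.e.\ $L_p$ norms \emph{on the frequency side}, and since $|\wh{s}|^p=|\tau|^p\,|\wh{N_m}|^p$ with $|\tau|^p$ being $2\pi$-periodic, Tonelli gives
\[
\int_{\R}|\tau(\omega)|^p\,|\wh{N_m}(\omega)|^p\,d\omega=\int_0^{2\pi}|\tau(\omega)|^p\sum_{\ell\in\Z}\bigl|\wh{N_m}(\omega+2\pi\ell)\bigr|^p\,d\omega
\]
for every $p\in(1,\infty)$, and likewise for $\wh{s^{(k)}}$. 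That is exactly how the paper proceeds: no Fourier multiplier theorem on $L_p(\R)$ is invoked, so the unboundedness of $(\iu\omega)^k$ as a multiplier is irrelevant. Your proposed replacement --- realizing $\wh{s}\mapsto\wh{s^{(k)}}$ as convolution with a kernel whose total variation equals the sharp constant, then applying Young --- is not constructed; and even granting such a kernel, Young's inequality only yields $\|K*f\|_p\le\|K\|_{TV}\|f\|_p$, with the $L_p$ operator norm equal to the total variation only in special circumstances, so asserting that the mass equals the Favard ratio amounts to assuming the conclusion. The genuinely $p$-dependent point you should be worrying about is a different one, which your plan (and, arguably, the paper's write-up) leaves implicit: after periodization the relevant quotient involves $\sum_\ell|\wh{N_j}(\omega+2\pi\ell)|^p$ rather than the squared sums, and one must still verify that its supremum is attained at $\omega=\pi$ and evaluates to the $p$-independent constant $\pi^{kp}\bigl(K_{2(m-k)+1}/K_{2m+1}\bigr)^{p/2}$.

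A second omission: the statement asserts the inequality is \emph{sharp}, and your proposal contains no lower bound. The paper proves sharpness by taking $|\wh{a_s}(\omega)|^p$ to be a Fej\'er kernel concentrated at the maximizing frequency $\omega_0=\pi$ and letting its order tend to infinity, so that the periodized quotient converges to its supremum; some explicit near-extremal sequence of splines of this kind is indispensable and is absent from your plan. Finally, track the indices at $\omega=\pi$ carefully: with $\wh{N_m}(\omega)=\frac{1}{\sqrt{2\pi}}\bigl(e^{-\iu\omega/2}\sin(\omega/2)/(\omega/2)\bigr)^m$ the lattice sums produce Favard constants whose subscripts must be matched exactly to land on $K_{2(m-k)+1}/K_{2m+1}$; you state the final constant without carrying out this bookkeeping.
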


\begin{proof}
 We first show that \eqref{eq:Cksplin} is true for $k=1$ and $h=1$.

Since $s'(x)=\sum_{\nu \in \Z}c_\nu N_m'(x-\nu)$, by
\eqref{eq:NmProperty1}, we have
\[
\begin{aligned}
\|\wh{s'}\|_p^p &= \int_{\R}\left|\sum_{\nu \in \Z}c_\nu e^{-i \nu
\omega}\wh{N_{m-1}}(\omega)(1-e^{-i\omega})\right|^pd\omega
\\&=\int_{0}^{2
\pi}\left|\wh{a_s}(\omega)(1-e^{-i
\omega})\right|^p\sum_{\ell\in\Z}\left|\wh{N_{m-1}}(\omega+2\pi
\ell)\right|^p d\omega
\\&
=\int_{0}^{2 \pi}\frac{\left|1-e^{-i
\omega}\right|^p\sum_{\ell\in\Z} \left|\wh{N_{m-1}}(\omega+2 \pi
\ell)\right|^p} {\sum_{\ell\in\Z}\left|\wh{N_m}(\omega+2 \pi
\ell)\right|^p}\left|\wh{a_s}(\omega)\right|^p
\sum_{\ell\in\Z}\left|\widehat{N}_{m}(\omega+2 \pi \ell)\right|^p d
\omega
\\&\leq
\max_{\omega\in[0,2\pi]}\frac{\left|1-e^{-i
\omega}\right|^p\sum_{\ell\in\Z}\left|\wh{N_{m-1}}(\omega+2 \pi
\ell)\right|^p} {\sum_{\ell\in\Z}\left|\wh{N_m}(\omega+2 \pi
\ell)\right|^p}\|\wh{s}\|_p^p.
\end{aligned}
\]
Here $\wh{a_s}(\omega)=\sum_{\nu\in\Z}c_\nu e^{-i\nu\omega}$.
 Denote $L(\omega):=\frac{\left|1-e^{-i \omega}\right|^2\sum_{\ell\in\Z}\left|\wh{N_{m-1}}(\omega+2 \pi l)\right|^2}
 {\sum_{\ell\in\Z}\left|\wh{N_m}(\omega+2 \pi \ell)\right|^2}$. Let
 us find the maximum of $L(\omega)$ on $[0,2\pi]$.

 A function of complex variables $z$, determined by
\begin{equation}
 E_{2m-1}(z)=(2m-1)! z^{m-1} \sum_{k=-m+1}^{m-1}N_{2m}(m+k)z^k, \forall z \in \R
\end{equation}
 is called Euler-Frobenious' polynomials of order $2m-1$ (or degree $2m-2$).
By  \cite[p.151]{Chui:book}, we have
\[
\sum_{\ell\in\Z}\left|\wh{N_m}(\omega+2 \pi l)\right|^2
=\frac{1}{(2m-1)!}\prod_{\ell=1}^{m-1}\frac{1-2\lambda_\ell \cos
\omega +\lambda_\ell^2}{|\lambda_\ell|},
\]
and similarly,
\[
\sum_{\ell\in\Z}\left|\wh{N_{m-1}}(\omega+2 \pi \ell)\right|^2=
\frac{1}{(2m-2)!}\prod_{\ell=1}^{m-2}\frac{1-2\beta_\ell \cos \omega
+\beta_\ell^{2}}{|\beta_\ell|},
\]
where $\lambda_\ell$ and $\beta_\ell$ are the roots of
Euler-Frobenious' polynomials $E_{2m-1}(z)$ and $E_{2m-3}(z)$
respectively. Moreover,
$-1<\lambda_{m-1}<\lambda_{m-2}<..<\lambda_1<0$ ,
$-1<\beta_{m-2}<\beta_{m-3}<..<\beta_1<0$, and
$\beta_j>\lambda_{j+1}$ for $j=1,\ldots,m-2$. Then, up to a
constant, the following is true
\[
L(\omega)=(1- \cos \omega)\frac{\prod_{\ell=1}^{m-2}(1-2\beta_\ell
\cos \omega +\beta_\ell^2)} {\prod_{\ell=1}^{m-1}(1-2\lambda_\ell
\cos \omega +\lambda_\ell^2)}=:(1-\cos\omega)A(\omega).
\]
Note that
\[
\begin{aligned}
A'(\omega)&=2\sin\omega\cdot
A(\omega)\left(\sum_{\ell=1}^{m-2}\frac{\beta_{\ell}}{1-2\beta_{\ell}\cos\omega+\beta_\ell^2}
-\sum_{\ell=1}^{m-1}\frac{\lambda_\ell}{1-2\lambda_{\ell}\cos\omega+{\lambda_\ell}^2}\right).
\end{aligned}
\]
It is easy to see that $1-2\lambda_\ell \cos \omega
+\lambda_\ell^2>0$ and $1-2\beta_\ell \cos \omega
+\beta_\ell^{2}>0$. Moreover,
\[
\begin{aligned}
\beta_{\ell}(1-2\lambda_{\ell+1}\cos\omega+\lambda_{\ell+1}^2)-
\lambda_{\ell+1}(1-2\beta_{\ell}\cos\omega+\beta_\ell^2)=(\beta_\ell-\lambda_{\ell+1})
(1-\beta_\ell\lambda_{\ell+1})>0.
\end{aligned}
\]
Consequently, $L'(\omega)=0$ has only one root $\omega=\pi$ on
$(0,2\pi)$. That is, $L(\omega)$ attends its maximum at
$\omega=\pi$. Hence,
\[
\max_{\omega\in[0,2\pi]}
L(\omega)=4\frac{\sum_{\ell\in\Z}\left|\wh{N_{m-1}}(\pi+2 \pi
\ell)\right|^2} {\sum_{\ell\in\Z}\left|\widehat{N}_{m}(\pi+2 \pi
\ell)\right|^2}.
\]
As in \cite{Chui:book},
\[
\sum_{\ell\in\Z}\left|\wh{N_m}(\pi+2 \pi \ell)\right|^{2}=
\frac{2^{2m+2} \sin^{2m+2}
\frac{\omega}{2}}{\sum_{\ell\in\Z}|\omega+2 \pi \ell|^{2m+2}}.
\]
Consequently,
\[
\begin{aligned}
\max_{\omega\in[0,2\pi]} L(\omega)&=4\frac{2^{2m} \sin^{2m}
\frac{\pi}{2}}{\sum_{\ell\in\Z}|\pi+2 \pi
\ell|^{2m}}\cdot\frac{\sum_{\ell\in\Z}|\pi+2 \pi
\ell|^{2m+2}}{2^{2m+2} \sin^{2m+2} \frac{\pi}{2}}
\\&=\pi^2\frac{\sum_{\ell\in\Z}|1+2 \pi
\ell|^{2m+2}}{\sum_{\ell\in\Z}|1+2 \pi
\ell|^{2m}}=\pi^2\frac{K_{2m-1}}{K_{2m+1}},
\end{aligned}
\]
where $K_{2m-1},K_{2m+1}$ are Favard's constants (cf.
\cite[p.64-65]{Kornejchuk:1984}).
From above calculations, we obtain,
\[
\|\wh{s'}\|_p\leq \pi
\left({\frac{K_{2m-1}}{K_{2m+1}}}\right)^{1/2}\|\wh{s}\|_p.
\]
For integral-valued shifts $h$ of splines
$
s(x)=\sum_{\nu \in \Z}c_{\nu} N_m(x+h \nu), \textit{ $h \in \Z$},
$
One can show that
\[
\|\wh{s'}\|_p\leq (\pi h)
\left({\frac{K_{2m-1}}{K_{2m+1}}}\right)^{1/2}\|\wh{s}\|_p.
\]

Now, by induction, it is easy to show that \eqref{eq:Cksplin} holds.

 Finally, we show that the
constant in \eqref{eq:Cksplin} is the best possible one.

 Let
$|\wh{a_s}(\omega)|^p=\frac{1}{2\pi}\Phi_j(\omega-\omega_0)$ and
$\wh{s}(\omega):=\wh{a_s}(\omega)\wh{N_m}(\omega)$, where
$\Phi_j(\omega)$ is a Feyer's kernel of order $j$ and $\omega_0=\pi$
is the point which realizes the  maximum  on the right hand side of
inequality \eqref{eq:Cksplin}. Note,  $\frac{1}{2 \pi} \int_0^{2
\pi}\Phi_j(\omega)d \omega=1$. Then,
\[
\begin{aligned}
\|\wh{s'}\|_p^p&=\int_\R \left|\sum_{\nu \in \Z}c_\nu e^{-i\nu
x}\wh{N_{m-1}}(\omega)(1-e^{-i\omega})\right|^pd\omega
\\&=\int_0^{2\pi}|\wh{a_s}(\omega)(1-e^{-i\omega})|^p\sum_{\ell\in\Z}
\left|\wh{N_m-1}(\omega+2\pi l)\right|^pd\omega
\\&
=\frac{1}{2\pi} \int_0^{2 \pi}\frac{|1-e^{-i
\omega}|^p\sum_{\ell\in\Z} \left|\wh{N_{m-1}}(\omega+2 \pi
\ell)\right|^p}{\sum_{\ell\in\Z}\left|\wh{N_m}(\omega+2 \pi
\ell)\right|^p}|\Phi_j(\omega-\omega_0)|
\sum_{\ell\in\Z}\left|\wh{N_{m}}(\omega+2 \pi \ell)\right|^p d
\omega
\\&
\rightarrow \frac{|1-e^{-i
\omega_0}|^p\sum_{\ell\in\Z}|\wh{N_{m-1}}(\omega_0+2 \pi
\ell)|^p}{\sum_{\ell\in\Z}|\wh{N_m}(\omega_0+2 \pi
\ell)|^p}\|\wh{s}\|_p^p ,\qquad j\rightarrow\infty.
\end{aligned}
\]
Consequently,
\[
\frac{\|\wh{s'}\|_p^p}{\|\wh{s}\|_p^p} \rightarrow
\max_{\omega\in[0,2\pi]} \frac{|1-e^{-i
\omega}|^p\sum_{\ell\in\Z}|\wh{N_{m-1}}(\omega+2 \pi
\ell)|^p}{\sum_{\ell\in\Z}|\wh{N_m}(\omega+2 \pi \ell)|^p}=\pi^p
\left({\frac{K_{2m-1}}{K_{2m+1}}}\right)^{p/2},
\]
which completes the proof.
\end{proof}

By Theorem~\ref{thm1:spline}, obviously, we have, $C_{k,p}(s)\ge
\left(\pi \right)^{-k}
\left({\frac{K_{2(m+k)+1}}{K_{2m-1}}}\right)^{1/2}$ for any $s\in
S_{m,h}$ such that $\wh{s}\in L_p$.  By the definition of $\psi_m^S$
in \eqref{def:SplineWave}, we have the following corollary.

\begin{corollary}\label{cor:splineWaveFixmC}
Let $k\geq 0$ be a fixed integer. Then,
$$
C_{k,p}(\psi_m^S)\ge \left(\frac{1}{2\pi}\right)^k
\left(\frac{K_{2(m+k)+1}}{K_{2m+1}}\right)^{1/2}.
$$
\end{corollary}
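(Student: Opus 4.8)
The plan is to derive the Corollary as a direct consequence of Theorem~\ref{thm1:spline} together with the semiorthogonal spline wavelet's explicit construction. The key observation is that $\psi_m^S$ is itself a spline: by \eqref{def:SplineWave}, $\psi_m^S$ is built from dilates $N_{2m}^{(m)}(2x-\nu)$ of the derivatives of the B-spline $N_{2m}$, so $\psi_m^S(x)$ is (up to the dilation $x\mapsto 2x$) the $m$th derivative of a function in the spline space $S_{2m,1}$. More precisely, I would write $\psi_m^S = \sigma^{(m)}(2\cdot)$ for an appropriate spline $\sigma\in S_{2m,1}$, namely $\sigma = \sum_{\nu}\tfrac{(-1)^\nu}{2^{m-1}}N_{2m}(\nu+1)N_{2m}(\cdot-\nu)$, and then relate the $k$th derivative of $\psi_m^S$ to the $(m+k)$th derivative of $\sigma$.

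First I would apply Theorem~\ref{thm1:spline} to the spline $\sigma\in S_{2m,1}$, but with the order parameter set to $2m$ rather than $m$ (so that the Favard ratio becomes $K_{2((2m)-j)+1}/K_{2(2m)+1}$ for the relevant derivative index $j$) and with $h=1$. Applying the theorem iteratively, or invoking the already-established inequality at a suitable derivative order, gives a bound of the form $\|\wh{\sigma^{(m+k)}}\|_p \le \pi^{m+k}(K_{2(m-k)+1}/K_{2m+1})^{1/2}\,\|\wh{\sigma}\|_p$-type control, after reindexing so that the numerator/denominator Favard constants line up with $K_{2(m+k)+1}$ and $K_{2m+1}$ as stated. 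The dilation $x\mapsto 2x$ contributes the factor $(1/2)^k$ to the ratio $\|\wh{_k\psi}\|_p/\|\wh\psi\|_p$ after one accounts for the Fourier-side scaling $\wh{f(2\cdot)}(\omega)=\tfrac12\wh f(\omega/2)$ and the homogeneity of the $(\iu\omega)^{-k}$ weight, which together produce the combined constant $(1/(2\pi))^k$ announced in the statement.

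The main obstacle I expect is the bookkeeping that turns the derivative bound of Theorem~\ref{thm1:spline} into the \emph{reciprocal} statement about $C_{k,p}(\psi_m^S)=\|\wh{_k\psi_m^S}\|_p/\|\wh{\psi_m^S}\|_p$. The quantity $C_{k,p}$ involves the antiderivative weight $(\iu\omega)^{-k}$, whereas the theorem controls the forward derivative $s^{(k)}$; so the lower bound on $C_{k,p}$ comes from running the sharp inequality "backwards." Concretely, setting $s={}_k\psi_m^S$ (so that $s^{(k)}=\psi_m^S$ in the Fourier sense of \eqref{def:kf}) and applying the theorem to this $s$ yields $\|\wh{\psi_m^S}\|_p \le (\text{Favard factor})\,\|\wh{_k\psi_m^S}\|_p$, which rearranges to the desired lower bound on the ratio. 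The remark immediately preceding the Corollary already records exactly this rearrangement in the general form $C_{k,p}(s)\ge \pi^{-k}(K_{2(m+k)+1}/K_{2m-1})^{1/2}$, so the remaining work is to substitute the spline order $2m$ (since $\varphi_m^S=N_m$ but the wavelet $\psi_m^S$ lives in the order-$2m$ spline setting through $N_{2m}^{(m)}$), track the extra factor $(1/2)^k$ from the dilation by $2$ in \eqref{def:SplineWave}, and confirm that the Favard indices reduce to $K_{2(m+k)+1}$ over $K_{2m+1}$ as claimed. I would close by verifying that $\psi_m^S$ indeed satisfies $\wh{\psi_m^S}\in L_p$, which holds because $\psi_m^S$ is compactly supported and piecewise polynomial, so the general bound is applicable.
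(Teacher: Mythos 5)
Your third paragraph is exactly the paper's argument: set $f:={}_k\psi_m^S$ so that $\wh{f^{(k)}}=\wh{\psi_m^S}$, apply Theorem~\ref{thm1:spline} to $f$, and invert the resulting inequality to get the lower bound on $C_{k,p}(\psi_m^S)=\|\wh{f}\|_p/\|\wh{f^{(k)}}\|_p$, with the dilation by $2$ in \eqref{def:SplineWave} supplying the extra $2^{-k}$. So the strategy is the right one and matches the paper. However, the one substantive step of the proof is precisely the piece you defer as ``remaining work,'' and the way you set it up would not deliver the stated constants: you propose to apply Theorem~\ref{thm1:spline} with spline order $2m$ (to $\sigma\in S_{2m,1}$). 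With order $2m$ and $k$ derivatives the theorem produces the ratio $K_{2(2m-k)+1}/K_{4m+1}$, and no reindexing turns that into $K_{2m+1}/K_{2(m+k)+1}$. The correct observation, which the paper makes explicitly, is that
\[
{}_k\psi_m^S(x)=\sum_{\nu=0}^{2m-2}\frac{(-1)^\nu}{2^{m+k-1}}N_{2m}(\nu+1)N_{2m}^{(m-k)}(2x-\nu),
\]
and $N_{2m}^{(m-k)}$ is, by repeated use of \eqref{eq:NmProperty1}, a finite combination of integer shifts of $N_{m+k}$; hence ${}_k\psi_m^S(\cdot/2)\in S_{m+k,1}$. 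Applying Theorem~\ref{thm1:spline} with order $m+k$ and $k$ derivatives gives $K_{2((m+k)-k)+1}/K_{2(m+k)+1}=K_{2m+1}/K_{2(m+k)+1}$, which is what the corollary needs. Without pinning down the order as $m+k$ rather than $2m$, the Favard indices simply do not come out as claimed.

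A secondary point: the first paragraph's plan of applying the theorem ``iteratively'' to $\sigma\in S_{2m,1}$ to control $\sigma^{(m+k)}$ cannot be combined with a bound on $\sigma^{(m)}$ to estimate the ratio $\|\wh{\sigma^{(m+k)}}\|_p/\|\wh{\sigma^{(m)}}\|_p$, because both inequalities bound derivatives from above by $\|\wh{\sigma}\|_p$; the inequalities point the same way and do not divide. The antiderivative route of your third paragraph avoids this, which is why the paper uses it. Finally, the integrability check should be performed for $\wh{{}_k\psi_m^S}$ (not for $\psi_m^S$ itself); it holds because the $m$ vanishing moments make $\wh{\psi_m^S}(\omega)=\mathcal{O}(|\omega|^m)$ near the origin, so the weight $(\iu\omega)^{-k}$ with $k\le m$ causes no singularity, and the decay at infinity is ample.
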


\begin{proof}
Let $f:=_k\psi_m^S$. Then $\wh{f^{(k)}}=\wh{\psi^S_m}$. By
\eqref{def:SplineWave},
\[
f(x)=_k\psi^S_m(x)=\sum_{\nu=0}^{2m-2}\frac{(-1)^\nu}{2^{m+k-1}}N_{2m}(\nu+1)N_{2m}^{(m-k)}(2x-\nu).
\]
Consequently, $f(\cdot/2)\in S_{m+k,1}$. In view of
Theorem~\ref{thm1:spline}, we have
\[
\frac{\|\wh{f^{(k)}}\|_p}{\|\wh{f}\|_p}\le (2\pi)^k
\left(\frac{K_{2m+1}}{K_{2(m+k)+1}}\right)^{1/2}.
\]
Now, by that
$C_{k,p}(\psi^S_m)=\frac{\|\wh{f}\|_p}{\|\wh{f^{(k)}}\|_p}$, we are
done.
\end{proof}

From Corollary~\ref{cor:splineWaveFixmC}, when $m$ is large enough,
we see that $C_{k,p}(\psi^S_m)\approx (\frac{1}{2\pi})^k$. In next
section, we shall study the exact asymptotic behavior of  these
types of  quantities  as $m\rightarrow \infty$ for both the family
of Daubechies orthonormal wavelets and the family of semiorthogonal
spline wavelets.

\section{The Asymptotic Estimation of Wavelet Coefficients} In this section, we shall study  the asymptotic behavior
of wavelet coefficients  for both Daubechies orthonormal wavelets
and semiorthogonal spline wavelets. We first study the asymptotic
behavior of the wavelet coefficients for Daubechies orthonormal
wavelets the first subsection. In the second subsection, We
investigate the asymptotic behavior of the wavelet coefficients for
semiorthogonal spline wavelets. In the last subsection, we shall
 compare the asymptotic behaviors of wavelet coefficients for
these two families based on the quantity defined in \eqref{def:Ckp}.

%

\subsection{The  Wavelet Coefficients of  Daubechies Orthonormal Wavelets}
Let $H_m(t)$ be a $2\pi$-periodic trigonometric function defined by
\begin{equation}\label{mask:Daub}
H_m(t)=\sum_{\nu=0}^Lh_\nu e^{-i\nu t},\quad |H_m(t)|^2
=1-c_m\int_{0}^t\sin^{2m-1}\omega d\omega,
\end{equation}
where $c_m=\left(\int_0^\pi\sin^{2m-1}\omega
d\omega\right)^{-1}=\frac{\Gamma(m+1)}{\sqrt{\pi}\Gamma(m)}\sim\sqrt{\frac{m}{\pi}}$.
Then, $H_m=a_m^D$ is the Daubechies orthonormal mask of order $m$
(cf. \cite{Ehrich:2000}).

To compare with the semiorthogonal spline wavelets, we need the
following result for the Daubechies scaling function $\varphi^D_m$.
\begin{theorem}\label{thm:CphiD}
Let $\varphi^D_m$ be the Daubechies orthonormal  scaling function of
order $m$, i.e.,
$\wh{\varphi^D_m}(\omega)=\frac{1}{\sqrt{2\pi}}\prod_{\ell=1}^\infty
H_m(2^{-\ell}\omega)$. Then
\begin{equation}\label{eq:kPhiD}
\lim_{m\rightarrow \infty}\|\wh{_{-k}\varphi^D_m}\|_p
=\pi^k\frac{(2\pi)^{1/p-1/2}}{(1+pk)^{1/p}} ,\quad k\in\N.
\end{equation}
\end{theorem}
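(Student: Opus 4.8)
The plan is to compute the weighted $L_p$ norm directly from the infinite-product formula for $\wh{\varphi^D_m}$, identify the pointwise limit of the integrand as $m\to\infty$, and pass the limit inside the integral. Since $\wh{_{-k}\varphi^D_m}(\omega)=(\iu\omega)^{k}\wh{\varphi^D_m}(\omega)$ by \eqref{def:kf}, I would first write
\[
\|\wh{_{-k}\varphi^D_m}\|_p^p=\frac{1}{(2\pi)^{p/2}}\int_\R|\omega|^{pk}\prod_{\ell=1}^\infty\bigl|H_m(2^{-\ell}\omega)\bigr|^p\,d\omega .
\]
The heart of the matter is the limiting shape of the single factor. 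From \eqref{mask:Daub}, $|H_m(t)|^2=1-c_m\int_0^t\sin^{2m-1}\omega\,d\omega$ with $c_m=\bigl(\int_0^\pi\sin^{2m-1}\bigr)^{-1}$; since the mass of $\sin^{2m-1}\omega$ concentrates at $\omega=\pi/2$ as $m\to\infty$, one gets $c_m\int_0^t\sin^{2m-1}\to0$ for $t<\pi/2$ and $\to1$ for $t>\pi/2$. Using that $|H_m|$ is even and $2\pi$-periodic, this gives $|H_m(t)|^2\to\mathbf{1}_{[-\pi/2,\pi/2]}(t)$ pointwise on the period, i.e.\ $H_m$ tends to the Shannon low-pass filter.

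Next I would identify the limit of the full product. For $|\omega|<\pi$ every argument $2^{-\ell}\omega$ with $\ell\ge1$ lies strictly inside $(-\pi/2,\pi/2)$, so each factor tends to $1$; a short tail estimate (for large $\ell$, $|H_m(2^{-\ell}\omega)|^2=1-O(c_m(2^{-\ell}\omega)^{2m})$ is uniformly close to $1$) shows the product tends to $1$. For $|\omega|>\pi$, locate the unique annulus $2^{j}\pi<|\omega|\le2^{j+1}\pi$; then the factor with index $\ell=j+1$ has argument in $(\pi/2,\pi]$ and tends to $0$, while all factors satisfy $|H_m|\le1$, forcing the product to $0$. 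Hence
\[
\prod_{\ell=1}^\infty\bigl|H_m(2^{-\ell}\omega)\bigr|^2\longrightarrow\mathbf{1}_{[-\pi,\pi]}(\omega)\qquad\text{a.e.},
\]
which is precisely the statement that $\wh{\varphi^D_m}$ converges pointwise to the Shannon scaling function $\frac{1}{\sqrt{2\pi}}\mathbf{1}_{[-\pi,\pi]}$.

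To pass to the limit I would split the integral at $|\omega|=\pi$. On $|\omega|\le\pi$ the integrand is bounded by $\pi^{pk}(2\pi)^{-p/2}$, so dominated convergence yields $\frac{1}{(2\pi)^{p/2}}\int_{-\pi}^{\pi}|\omega|^{pk}\,d\omega$. The delicate part --- and the step I expect to be the \emph{main obstacle} --- is showing that the tail $\int_{|\omega|>\pi}|\omega|^{pk}|\wh{\varphi^D_m}|^p\,d\omega$ vanishes as $m\to\infty$. Pointwise the integrand tends to $0$ there, so it suffices to produce an integrable majorant valid for all large $m$. Here I would exploit the Riesz factorization behind \eqref{def:DaubWave}: writing $|H_m(t)|^2=\cos^{2m}(t/2)|Q_m(t)|^2$ with $|Q_m(t)|^2=\sum_{\nu=0}^{m-1}\binom{m-1+\nu}{\nu}\sin^{2\nu}(t/2)$ and telescoping the cosine product gives
\[
\bigl|\wh{\varphi^D_m}(\omega)\bigr|=\frac{1}{\sqrt{2\pi}}\Bigl|\tfrac{\sin(\omega/2)}{\omega/2}\Bigr|^{m}\prod_{\ell=1}^\infty\bigl|Q_m(2^{-\ell}\omega)\bigr| .
\]
The factor $\bigl|\tfrac{\sin(\omega/2)}{\omega/2}\bigr|^{m}$ supplies decay $\sim|\omega|^{-m}$, which for $m\ge m_0=m_0(k,p)$ dominates $|\omega|^{pk}$ against the growth of the $Q_m$-product; combined with the standard Cohen--Daubechies decay estimate for $\wh{\varphi^D_m}$ this yields an $m$-uniform integrable bound $|\omega|^{pk}|\wh{\varphi^D_m}(\omega)|^p\le C|\omega|^{pk}(1+|\omega|)^{-p(k+1)}$ (exponent $-p<-1$) on $|\omega|>\pi$, and dominated convergence forces the tail to $0$. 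Making the control of $\prod_\ell|Q_m(2^{-\ell}\omega)|$ genuinely uniform in $m$ is the technical crux.

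Finally I would evaluate the surviving integral and take the $p$-th root. Since
\[
\lim_{m\to\infty}\|\wh{_{-k}\varphi^D_m}\|_p^p=\frac{1}{(2\pi)^{p/2}}\int_{-\pi}^{\pi}|\omega|^{pk}\,d\omega=\frac{1}{(2\pi)^{p/2}}\cdot\frac{2\pi^{pk+1}}{pk+1},
\]
we obtain $\displaystyle\lim_{m\to\infty}\|\wh{_{-k}\varphi^D_m}\|_p=\frac{2^{1/p}\pi^{k+1/p}}{(2\pi)^{1/2}(1+pk)^{1/p}}=\pi^{k}\frac{(2\pi)^{1/p-1/2}}{(1+pk)^{1/p}}$, the asserted value.
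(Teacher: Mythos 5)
Your strategy coincides with the paper's: both identify $\wh{\varphi^D_m}$ as converging to the Shannon scaling function $\Phi=\frac{1}{\sqrt{2\pi}}\chi_{[-\pi,\pi]}$, split the integral at $|\omega|=\pi$, handle the inner region by boundedness of the integrand, and reduce everything to the tail $\int_{|\omega|>\pi}|\omega|^{pk}|\wh{\varphi^D_m}|^p\,d\omega\to0$; the limiting value $\pi^{pk}(2\pi)^{1-p/2}/(1+pk)$ and the final arithmetic agree. The one substantive difference is exactly the step you flag as the ``technical crux.'' The paper does not attempt a uniform-in-$m$ integrable majorant at all: it simply invokes the known regularity estimate $|\wh{\varphi^D_m}(\omega)|\le C_1|\omega|^{-C_2\log m}$ (with $C_1,C_2$ absolute), so that the tail is bounded by $C_1^p\pi^{pk+1-pC_2\log m}/(pC_2\log m-pk-1)\to0$ --- the decay \emph{exponent} grows with $m$, and no dominated-convergence argument on the tail is needed. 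Be aware that the route you sketch is genuinely harder than it looks: telescoping the cosine product gives $|\wh{\varphi^D_m}(\omega)|=\frac{1}{\sqrt{2\pi}}|\tfrac{\sin(\omega/2)}{\omega/2}|^m\prod_\ell|Q_m(2^{-\ell}\omega)|$, but the crude bound $\prod_\ell|Q_m(2^{-\ell}\omega)|\le C(1+|\omega|)^{\log(\sup|Q_m|)/\log 2}$ is useless here since $\sup|Q_m|^2=\binom{2m-1}{m}\sim 4^m$, so the product can grow like $|\omega|^{2m}$ and overwhelm the $|\omega|^{-m}$ factor; one needs the two-scale (Cohen--Daubechies) estimate on $\sup_t|Q_m(t)Q_m(2t)|^{1/2}$, which is precisely the content of the regularity bound the paper cites. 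So your proof closes once you replace the attempted uniform majorant by that standard estimate; as written, the tail step is the only incomplete point, and your treatment of the region $|\omega|\le\pi$ (direct pointwise convergence of the full product plus dominated convergence) is if anything cleaner than the paper's $\delta$-layer decomposition.
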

\begin{proof}
Let $\Phi:=\frac{1}{\sqrt{2\pi}}\chi_{[-\pi,\pi]}$. We have
\[
\|\wh{_{-k}\varphi^D_m}\|_p^p
=\int_{\R}|\omega|^{pk}|\wh{\varphi^D_m}(\omega)|^p d\omega
=\int_{\R}|\omega|^{pk}|\wh{\varphi^D_m}(\omega)-\Phi(\omega)+\Phi(\omega)|^p
d\omega
\]
Note that
\[
\int_{\R}|\omega|^{pk}|\Phi(\omega)|^pd\omega=\pi^{pk}\frac{(2\pi)^{1-p/2}}{1+pk}.
\]
We next prove that
\[
I:=\int_{\R}|\omega|^{pk}|\wh{\varphi^D_m}-\Phi(\omega)|^pd\omega\rightarrow
0,\quad \mbox{as } m\rightarrow\infty.
\]
In fact,
\[
I=\int_{|\omega|>\pi}|\omega|^{pk}|\wh{\varphi^D_m}(\omega)|^pd\omega
+\int_{|\omega|\le\pi}|\omega|^{pk}|\wh{\varphi^D_m}(\omega)-\Phi(\omega)|^pd\omega=:I_1+I_2.
\]
By the regularity of $\varphi^D_m$, i.e.,
$|\wh{\varphi^D_m}(\omega)|\le C_1|\omega|^{-C_2\log(m)}$,
obviously, $I_1\rightarrow0$ as $m\rightarrow\infty$. For $I_2$, let
$I:=[-\pi,\pi]$, $\delta>0$ be fixed, and
$I_\delta:=[-\pi+\delta,\pi-\delta]$. Then
\[
I_2=\int_{I_\delta}|\omega|^{pk}|\wh{\varphi^D_m}(\omega)-\Phi(\omega)|^pd\omega+
\int_{I\backslash
I_\delta}|\omega|^{pk}|\wh{\varphi^D_m}(\omega)-\Phi(\omega)|^pd\omega:=I_{21}+I_{22}.
\]
For $I_{22}$, we have $I_{22}\le C\delta$ for some $C$ depending
only on $p,k$, since $\wh{\varphi^D_m}$ and $\Phi$ are both bounded.
For $I_{21}$, we have
\[
I_{21}\le\int_{I_\delta}|\omega|^{pk}|\wh{\varphi^D_m}(\omega)-\frac{1}{\sqrt{2\pi}}H_m(\omega/2)|^pd\omega
+\int_{I_\delta}|\omega|^{pk}|\frac{1}{\sqrt{2\pi}}H_m(\omega/2)-\Phi(\omega)|^pd\omega\rightarrow
0
\]
as $m\rightarrow \infty$ since $\frac{1}{\sqrt{2\pi}}H_m(\omega/2)$
converges to $\Phi$ uniformly in $I_\delta$ and
\[
\begin{aligned}
&\int_{I_\delta}|\omega|^{pk}|\wh{\varphi^D_m}(\omega)-\frac{1}{\sqrt{2\pi}}H_m(\omega/2)|^pd\omega
\\&\le
\int_{I_\delta}|\omega|^{pk}\left|\frac{1}{\sqrt{2\pi}}H_m(\omega/2)\left(\prod_{\ell=1}^{\infty}H_m(2^{-l-1}\omega)-1\right)\right|^pd\omega
\\&\le
\int_{I_\delta}|\omega|^{pk}\left|\frac{1}{\sqrt{2\pi}}\left(\prod_{\ell=1}^{\infty}H_m(2^{-l-1}\omega)-1\right)\right|^pd\omega
\rightarrow 0,\quad m\rightarrow \infty.
\end{aligned}
\]
Consequently, we obtain
\[
\lim_{m\rightarrow \infty}\|\wh{_{-k}\varphi^D_m}\|_p
=\pi^k\frac{(2\pi)^{1/p-1/2}}{(1+pk)^{1/p}} ,\quad k\in\N.
\]
\end{proof}
More generally,  one can also show that for $\alpha\in\R$ such that
$1-p\alpha>0$,
\begin{equation}\label{eq:alphaCphiD}
\lim_{m\rightarrow \infty}\|\wh{_{\alpha}\varphi^D_m}\|_p
=\pi^{-\alpha}\frac{(2\pi)^{1/p-1/2}}{(1-p\alpha)^{1/p}},
\end{equation}
where for a real number $\alpha\in\R$, the function
$_\alpha\varphi^D_m$ is similarly defined as in \eqref{def:kf}.
However when $1-p\alpha\le0$, i.e., $\alpha\ge 1/p$, the constant
$\|\wh{_{\alpha}\varphi^D_m}\|_p\rightarrow\infty$ as
$m\rightarrow\infty$.

When $k$ fixed and $m\rightarrow\infty$,  Babenko and Spektor
(\cite{Babenko.Spektor:2007})show that, for the Daubechies
orthonormal wavelet function $\psi^D_m$ with $m$ vanishing moments,
one has
\begin{equation}\label{eq:kPsiD}
\lim_{m\rightarrow \infty}\|\wh{_k\psi^D_m}\|_p
=\frac{(2\pi)^{1/p-1/2}}{\pi^k}\left(\frac{1-2^{1-pk}}{pk-1}\right)^{1/p}
,\quad k\in\N.
\end{equation}

When $k=m$, we can deduce the following estimation, which in turn
gives rise to  the asymptotic behavior of the constant
$[C_{m,p}(\psi^D_m)]^{1/m}$.
\begin{theorem}\label{thm:CmpsimD}
Let $\psi_m^D$ be the Daubechies wavelet with $m$ vanishing moments,
i.e., $\wh{\psi_m^D}(\omega)=\frac{1}{\sqrt{2\pi}}
H_m(\omega/2+\pi)\prod_{\ell=1}^\infty H_m(2^{-l-1}\omega)$.
 Then
\begin{equation}
\|\wh{_m\psi_m^D}\|_p =
C\cdot\frac{2^{1/p}}{\sqrt{2\pi}}\cdot\frac{2^{-m}\cdot
A(m)}{(\sqrt{mp/2})^{1/p}}\cdot(1+\mathcal{O}(m^{-1/2}),
\end{equation}
where $C$ is a positive constant independent of $m$ and
$\sqrt{\frac{c_m}{2m}}\le A(m)\le \sqrt{\frac{1}{2}}$.
\end{theorem}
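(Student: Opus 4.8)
The plan is to reduce the $L_p$ norm to a single scalar integral that concentrates near the origin and then extract its asymptotics by Laplace's method. First I would write, using \eqref{def:kf},
\[
\|\wh{_m\psi_m^D}\|_p^p=\int_\R|\omega|^{-mp}\,|\wh{\psi_m^D}(\omega)|^p\,d\omega,
\]
and observe from the two product formulas that $\wh{\psi_m^D}(\omega)=H_m(\omega/2+\pi)\,\wh{\varphi_m^D}(\omega/2)$, since $\prod_{\ell\ge1}H_m(2^{-\ell-1}\omega)=\sqrt{2\pi}\,\wh{\varphi_m^D}(\omega/2)$. Substituting $\omega=2\xi$ pulls out the dominant power of two,
\[
\|\wh{_m\psi_m^D}\|_p^p=2^{\,1-mp}\int_\R g_m(\xi)^p\,|\wh{\varphi_m^D}(\xi)|^p\,d\xi,\qquad g_m(\xi):=\frac{|H_m(\xi+\pi)|}{|\xi|^m},
\]
which already accounts for the factors $2^{1/p}$ and $2^{-m}$ in the statement. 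Thus the whole problem is the asymptotics of the shape integral $\mathcal I_m:=\int_\R g_m(\xi)^p|\wh{\varphi_m^D}(\xi)|^p\,d\xi$.

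Next I would obtain an exact closed form for $g_m$ near the origin. From \eqref{mask:Daub}, using $\sin(\pi+\sigma)=-\sin\sigma$ and $|H_m(\pi)|=0$, one gets $|H_m(\xi+\pi)|^2=c_m\int_0^\xi\sin^{2m-1}\sigma\,d\sigma$ for $\xi\in(-\pi,\pi)$. Writing $\sigma=\xi t$ and $\sin^{2m-1}(\xi t)=(\xi t)^{2m-1}\,\mathrm{sinc}^{2m-1}(\xi t)$ with $\mathrm{sinc}(x):=\sin(x)/x$ yields the clean representation
\[
g_m(\xi)^2=c_m\int_0^1 t^{2m-1}\,\mathrm{sinc}^{2m-1}(\xi t)\,dt.
\]
In particular the peak amplitude is $g_m(0)=\sqrt{c_m/(2m)}$, which is exactly the lower endpoint for $A(m)$, and because the weight $t^{2m-1}$ concentrates at $t=1$ with width $O(1/m)$ one has $g_m(\xi)^2=\frac{c_m}{2m}\,\mathrm{sinc}^{2m-1}(\xi)\,(1+O(1/m))$ on the range that matters. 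So $A(m)$ plays the role of this peak amplitude $\sqrt{c_m/(2m)}$; since $c_m\sim\sqrt{m/\pi}$ this value tends to $0$, so it indeed lies below the crude upper endpoint $1/\sqrt2$ for large $m$.

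Then I would carry out Laplace's method on $\mathcal I_m$. Using $\mathrm{sinc}(x)\le e^{-x^2/6}$ on $(-\pi,\pi)$ for the upper bound and a matching quadratic lower bound near $0$, together with the uniform convergence $\wh{\varphi_m^D}(\xi)\to(2\pi)^{-1/2}$ on shrinking neighborhoods of the origin (exactly as established inside the proof of Theorem~\ref{thm:CphiD}), the mass of $\mathcal I_m$ concentrates in $|\xi|\lesssim m^{-1/2}$, where
\[
\mathcal I_m=\left(\frac{c_m}{2m}\right)^{p/2}(2\pi)^{-p/2}\int_\R e^{-mp\xi^2/6}\,d\xi\;(1+o(1))=\left(\frac{c_m}{2m}\right)^{p/2}(2\pi)^{-p/2}\sqrt{\frac{6\pi}{mp}}\,(1+o(1)).
\]
Combining with the prefactor $2^{1-mp}$ and taking $p$-th roots identifies the constant $C=(3\pi)^{1/(2p)}$ (which is $m$-independent), the amplitude $A(m)=\sqrt{c_m/(2m)}(1+o(1))$, and the denominator $(\sqrt{mp/2})^{1/p}$, the last because the key algebraic identity $\sqrt{6\pi/(mp)}\cdot\sqrt{mp/2}=\sqrt{3\pi}$ collapses all remaining $m$-dependence into $m^{-1/(2p)}$. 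The two-sided bracket $\sqrt{c_m/(2m)}\le A(m)\le 1/\sqrt2$ in the statement is then read off from the corresponding two-sided estimates of $\mathcal I_m$.

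The main obstacle is sharpening $o(1)$ to the claimed $(1+\mathcal O(m^{-1/2}))$ and making the endpoints of $A(m)$ genuine rather than merely asymptotic. This requires uniform control, on the peak $|\xi|\lesssim m^{-1/2}$, of three error sources: the replacement of $\mathrm{sinc}^{2m-1}(\xi)$ by $e^{-mp\xi^2/6}$ (relative error $O(1/m)$ from the quartic term in $\log\mathrm{sinc}$), the concentration correction of the $t^{2m-1}$-weight, and—dominant at order $m^{-1/2}$—the deviation $\wh{\varphi_m^D}(\xi)-(2\pi)^{-1/2}=O(\xi)$ over a window of width $m^{-1/2}$, for which one needs an explicit first-order estimate of $\wh{\varphi_m^D}$ at the origin that is uniform in $m$. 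One must also show the tails $|\xi|\gtrsim m^{-1/2}$ and the regions away from $\xi=0$ (where $g_m$ is exponentially small and $\wh{\varphi_m^D}$ decays) contribute negligibly beyond the stated order, so that the localization leading to the Gaussian integral is fully justified.
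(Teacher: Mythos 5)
Your outline is essentially the paper's proof, reorganized: the same splitting into $|\omega|\le\pi$ and $|\omega|>\pi$ with an exponentially small tail, the same identity $|H_m(\xi+\pi)|^2=c_m\int_0^\xi\sin^{2m-1}\sigma\,d\sigma$, the same extraction of a $\sin^{2m}$ factor reducing everything to the Laplace-type integral $\int_{-\pi}^{\pi}(\sin(\omega/2)/(\omega/2))^{mp}d\omega=C(\sqrt{mp/2})^{-1}(1+\mathcal{O}(m^{-1/2}))$, which is where the stated denominator comes from. Two remarks on where you diverge. First, the paper does not try to pin down $A(m)$: after the substitution $u=\sin^2 t/\sin^2(\omega/2)$ it writes $|H_m(\omega/2+\pi)|^2=\tfrac{c_m}{2}\sin^{2m}(\omega/2)\int_0^1u^{m-1}(1-u\sin^2(\omega/2))^{-1/2}\,du$ and sandwiches the inner integral pointwise between $1/m$ and $c_m^{-1}$; this yields the two-sided bracket $\sqrt{c_m/(2m)}\le A(m)\le\sqrt{1/2}$ exactly as stated, with no localization argument needed for the amplitude. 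Your plan to prove the sharper statement $A(m)=\sqrt{c_m/(2m)}(1+o(1))$ is plausible (the Laplace peak sits at $\omega=0$, where $\sin^2(\omega/2)=\mathcal{O}(1/m)$ and the inner integral is $\tfrac1m(1+\mathcal{O}(1/m))$), but it is strictly more than the theorem asserts and more than you need.

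Second, the item you single out as the main obstacle --- a first-order, $m$-uniform estimate of $\wh{\varphi^D_m}$ near the origin --- is not actually delicate, and your diagnosis of an $\mathcal{O}(\xi)$ deviation "dominant at order $m^{-1/2}$" is off the mark. Since $1-|H_m(t)|^2=c_m\int_0^{|t|}\sin^{2m-1}\sigma\,d\sigma\le c_m|t|^{2m}$ for $|t|\le\pi/2$, one has, uniformly for $|\xi|\le\pi/2$,
\[
\prod_{\ell=1}^\infty|H_m(2^{-\ell}\xi)|^2\;\ge\;1-c_m\Big(\frac{\pi}{4}\Big)^{2m}\sum_{j=0}^\infty 4^{-mj}\;=\;1-\mathcal{O}\big(c_m(\pi/4)^{2m}\big),
\]
so $|\wh{\varphi^D_m}(\xi)|^p=(2\pi)^{-p/2}\big(1+\mathcal{O}(c_m(\pi/4)^{2m})\big)$ with an exponentially small, $\xi$-uniform relative error. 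The genuinely first-order deviation $\wh{\varphi^D_m}(\xi)-(2\pi)^{-1/2}=\mathcal{O}(\xi)$ lives in the phase of the complex value, which never enters here because only the modulus appears in the integrand. This is precisely the product estimate the paper uses (it bounds $|H_m(\omega/4)|^2$, $|H_m(\omega/8)|^2$ and the tail product from below by $1-o(1)$ with exponentially small defect); with it in hand, your remaining error sources are all $\mathcal{O}(m^{-1/2})$ and the argument closes.
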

\begin{proof}By definition,
\[
\|\wh{_m\psi_m^D}\|_p^p = \int_\R
|\omega|^{-mp}|\wh{\psi_m^D}(\omega)|^pd\omega
=\left\{\int_{|\omega|\le\pi}+\int_{|\omega|> \pi}\right\}
|\omega|^{-mp}|\wh{\psi_m^D}(\omega)|^pd\omega =:I_1+I_2.
\]

We first estimate $I_2$. Since $|H_m(t)\le1$,
\[
I_2\le\frac{2}{\sqrt{2\pi}^p}\int_{\pi}^\infty\omega^{-mp}d\omega
\le \frac{2}{\sqrt{2\pi}^p}\cdot
\frac{1}{mp-1}\left(\frac{1}{\pi}\right)^{mp-1},\quad mp>1.
\]

Next,  we show that $I_1\sim C\cdot
c_m^{p/2}\cdot(\sqrt{mp/2})^{-1}\cdot2^{-mp}$. By the definition of
$H_m(t)$, for $\omega\in[0,\pi]$, we have
\[
\begin{aligned}
|H_m(\frac{\omega}{4})|^2\ge
1-c_m\frac{\omega}{4}\sin^{2m-1}(\frac{\omega}{4})\ge
1-c_m\frac{\pi}{4}\sin^{2m-1}(\frac{\pi}{4})\ge
1-\frac{\Gamma(m+\frac12)}{\sqrt{\pi}\Gamma(m)}\left(\frac{\pi}{4}\right)^{2m},
\end{aligned}
\]
\[
\begin{aligned}
|H_m(\frac{\omega}{8})|^2\ge1-c_m\left(\frac{\pi}{4}\right)^{2m},
\end{aligned}
\]
and
\[
\begin{aligned}
\prod_{\ell=1}^\infty|H_m(2^{-l-3}\omega)|^2&\ge\prod_{\ell=1}^\infty|1-c_m\left(2^{-l-3}\omega\right)^{2m}|
\ge
\prod_{\ell=1}^\infty|1-c_m\left(\frac{\pi}{4}\right)^{2m}(2^{-2m})^{l}|
\\&\ge
\prod_{\ell=1}^\infty|1-(2^{-2m})^{l}|\ge(1-2^{-2m})^{1/(1-2^{-2m})}.
\end{aligned}
\]
Thus,
\[
\begin{aligned}
I_1&=\frac{1}{(\sqrt{2\pi})^p}\int_{|\omega|\le\pi}|\omega|^{-mp}
\left[
|H_m(\omega/2+\pi)|^2|H_m(\omega/4)|^2|H_m(\omega/8)|^2\right.
\\&\qquad\qquad\times\left.\prod_{\ell=1}^\infty|H_m(2^{-l-3}\omega)|^2
\right]^{p/2}d\omega
\\&
\ge(1-o(1))\frac{1}{(\sqrt{2\pi})^p}\int_{|\omega|\le\pi}|\omega|^{-mp}|H_m(\omega/2+\pi)|^pd\omega.
\end{aligned}
\]
Obviously,
\[
I_1\le
\frac{1}{(\sqrt{2\pi})^p}\int_{|\omega|\le\pi}|\omega|^{-mp}|H_m(\omega/2+\pi)|^pd\omega.
\]
Now, we use the property of $H_m$ to deduce the asymptotic behavior
of
\[
I_{11}:=\int_{|\omega|\le\pi}|\omega|^{-mp}|H_m(\omega/2+\pi)|^pd\omega.
\]
Let $u=\frac{\sin^2t}{\sin^2(\omega/2)}$. We have
\[
\begin{aligned}
|H_m(\omega/2+\pi)|^2 &= c_m\int_0^{\omega/2}\sin^{2m-1}tdt
\\&=\frac{c_m}{2}\sin^{2m}(\omega/2)\int_0^1u^{m-1}(1-u\sin^2(\omega/2))^{-1/2}du
\end{aligned}
\]
Since
\[
\frac1m=\int_0^1u^{m-1}du\le\int_0^1u^{m-1}(1-u\sin^2(\omega/2))^{-1/2}du\le\int_0^1u^{m-1}(1-u)^{-1/2}du=c_m^{-1}
\]
and
\[
\begin{aligned}
 I_{11}&= 2\int_0^{\pi}|\omega|^{-mp}\cdot
\left[\frac{c_m}{2}\sin^{2m}(\omega/2)\int_0^1u^{m-1}(1-u\sin^2(\omega/2))^{-1/2}du\right]^{p/2}d\omega,
\end{aligned}
\]
we obtain
\[
 \left(\frac{c_m}{2m}\right)^{p/2}\cdot
2^{-mp}\cdot\int_0^\pi\left(\frac{\sin(\omega/2)}{\omega/2}\right)^{mp}d\omega
\le \frac12 I_{11}\le
 \left(\frac12\right)^{-p/2}\cdot
2^{-mp}\cdot\int_0^\pi\left(\frac{\sin(\omega/2)}{\omega/2}\right)^{mp}d\omega.
\]
Now by that
$\int_{-\pi}^\pi\left(\frac{\sin(\omega/2)}{\omega/2}\right)^{2\cdot
mp/2}d\omega = C (\sqrt{mp/2} )^{-1}(1+\mathcal{O}(m^{-1/2}))$ and
$\frac{1}{\pi}<\frac12$, we conclude that
\[
\|\wh{_m\psi_m^D}\|_p^p = C
\cdot\frac{2}{(\sqrt{2\pi})^p}\cdot\frac{2^{-mp}\cdot
A(m)^p}{\sqrt{mp/2}}\cdot(1+\mathcal{O}(m^{-1/2})),
\]
which completes our proof.
\end{proof}

\subsection{The  Wavelet Coefficients of Semiorthogonal Spline Wavelets}
In this subsection, we mainly focus on the asymptotic behavior of
wavelet coefficients for the semiorthogonal spline wavelets. We
shall present the asymptotic estimations of the following
quantities: $\|\wh{_k\varphi^S_m}\|_p$, $\|\wh{_k\psi^S_m}\|_p$, and
$\|\wh{_m\psi^S_m}\|_p$.

First, for the scaling function $\varphi^S_m$, which is the B-spline
$N_m$ of order $m$, we have the following result:
\begin{theorem}
\label{thm:Cphi} Let $\varphi_m^S:=N_{m}$ be the B-Spline of order
$m$. Let $k\ge0$ be an integer. Then
\begin{equation}
\|\wh{_k\varphi_m^S}\|_p=\frac{8^{1/p}}{(\sqrt{2\pi})^{1-1/p}}\cdot\frac{1}{(\sqrt{\Lambda_1
m
p})^{1/p}}\cdot(2\xi_1)^{-k}\cdot(\lambda_1/\xi_1)^{m/2}\cdot(1+\mathcal
{O}(m^{-1/2})),
\end{equation}
where
\begin{equation}
\begin{aligned}
\lambda_1 & = \frac{\sin^2(\xi_1)}{\xi_1} = 0.72461...,\\
\Lambda_1 & =
-\frac12\frac{d^2}{d\omega^2}\ln\frac{\sin^2(\xi_1-\omega)}{\xi_1-\omega}\Big{|}_{\omega=0}=
0.81597...,
\end{aligned}
\end{equation}
and $\xi_1=1.1655...$ is the unique solution of the transcendental
equation $ \xi_1-2\cot(\xi_1)=0$ in the interval $(0,\pi)$.
\end{theorem}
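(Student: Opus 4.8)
The plan is to evaluate the $L_p$ norm directly as a one–dimensional integral and read off its large-$m$ behaviour by Laplace's method. First I would use the definition of the operator $_k$ together with $\varphi_m^S=N_m$ and the explicit formula $\wh{N_m}(\omega)=\frac{1}{\sqrt{2\pi}}\left(e^{-\iu\omega/2}\frac{\sin(\omega/2)}{\omega/2}\right)^m$ to write
\[
\|\wh{_k\varphi_m^S}\|_p^p=\int_\R|\omega|^{-kp}\,|\wh{N_m}(\omega)|^p\,d\omega .
\]
Reorganizing the integrand, the part that is raised to a power growing linearly in $m$ is a fixed profile $g(\omega)=\frac{\sin^2(\omega/2)}{\omega/2}$, so that the integrand has the shape (smooth algebraic prefactor)$\times g(\omega)^{mp/2}$, the prefactor being proportional to $|\omega|^{-kp}$. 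Since the integrand is even I would reduce to twice the integral over $(0,\infty)$. For large $m$ the factor $g(\omega)^{mp/2}$ is sharply concentrated at the maximizer of $g$, and the whole problem becomes a Laplace/Watson asymptotic about that peak.

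Second, I would locate the peak. Setting $\frac{d}{d\omega}\ln g(\omega)=\cot(\omega/2)-\frac1\omega=0$ gives $\cot(\omega/2)=1/\omega$; writing $s=\omega/2$ this is exactly the transcendental equation characterizing $\xi_1\in(0,\pi)$, so the maximizer is $\omega_0=2\xi_1$. One checks it is the unique interior maximum on $(0,\pi)$ (there $g(\omega_0)=\frac{\sin^2\xi_1}{\xi_1}=\lambda_1>g(\pi)$) and, because the denominator $\omega/2$ is increasing, the global maximum of $g$ on $(0,\infty)$; by evenness the two symmetric maximizers $\pm\omega_0$ each contribute, which is the origin of the leading numerical factor. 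The value $g(\omega_0)$ is what drives the exponential decay.

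Third, I would carry out the Laplace step. Expanding $\ln g$ to second order at $\omega_0$, the quadratic coefficient is $\tfrac12\frac{d^2}{d\omega^2}\ln g(\omega)\big|_{\omega_0}$, which by the chain rule $s=\omega/2$ equals $\tfrac18\frac{d^2}{ds^2}\ln\frac{\sin^2 s}{s}\big|_{\xi_1}=-\tfrac14\Lambda_1$ in the notation of the statement; in particular $\Lambda_1>0$ is the strict concavity at the peak. Thus near $\omega_0$ one has $g(\omega)^{mp/2}\approx g(\omega_0)^{mp/2}e^{-a(\omega-\omega_0)^2}$ with $a=mp\Lambda_1/8$, and $\int e^{-a\tau^2}\,d\tau=\sqrt{8\pi/(mp\Lambda_1)}$ yields the width factor $\propto(\Lambda_1 mp)^{-1/2}$. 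The slowly varying prefactor $|\omega|^{-kp}$ may be frozen at $\omega_0$ and contributes $(2\xi_1)^{-kp}$. Collecting the contributions of the two peaks, the $2\pi$-normalization of $\wh{N_m}$, and the Gaussian prefactor, and then taking the $p$-th root, assembles the claimed closed form: the $p$-th root of $g(\omega_0)^{mp/2}$ together with the remaining $\xi_1$-powers from the normalization produces the decay $(\lambda_1/\xi_1)^{m/2}$, and the constants combine into $8^{1/p}/(\sqrt{2\pi})^{1-1/p}$ and $(2\xi_1)^{-k}$.

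The hard part will be making this rigorous with the stated relative error $1+\mathcal{O}(m^{-1/2})$. This requires two things: (i) a tail estimate showing that the contribution of $\omega$ bounded away from $\pm\omega_0$—including the secondary lobes on $(\pi,2\pi)$ and the region $|\omega|>\pi$—is exponentially smaller, which follows from $g(\omega)<g(\omega_0)$ off the peaks; and (ii) a uniform control of the remainder in the local quadratic expansion, so that after the rescaling $\omega-\omega_0=\tau/\sqrt{m}$ the cubic term and the variation of the algebraic prefactor both contribute only $\mathcal{O}(m^{-1/2})$. Both are standard once the strict concavity $\Lambda_1>0$ and the uniqueness of the global maximizer are in hand; the delicate point is the careful bookkeeping of the multiplicative constants needed to land exactly on the stated expression.
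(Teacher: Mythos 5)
Your overall route (reduce to the one--dimensional integral of $|\omega|^{-kp}|\wh{N_m}(\omega)|^p$, discard the tails, and run a Laplace/Watson analysis at an interior peak governed by $\xi_1,\lambda_1,\Lambda_1$) is the same as the paper's, but there is a concrete gap at the very first step of your local analysis: the factorization of the integrand is wrong. With $g(\omega)=\sin^2(\omega/2)/(\omega/2)$ one has $g(\omega)^{mp/2}=|\sin(\omega/2)|^{mp}\,(\omega/2)^{-mp/2}$, whereas $|\wh{N_m}(\omega)|^p=(2\pi)^{-p/2}|\sin(\omega/2)|^{mp}\,(\omega/2)^{-mp}$. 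The discrepancy $(\omega/2)^{-mp/2}$ varies exponentially in $m$ and cannot be absorbed into a ``smooth algebraic prefactor proportional to $|\omega|^{-kp}$'' that is frozen at $\omega_0=2\xi_1$. If you instead exponentiate the true profile $\left(\sin(\omega/2)/(\omega/2)\right)^{2}$, its maximum on $(0,2\pi)$ is at $\omega=0$ (value $1$), not at $2\xi_1$, and the Laplace peak then sits exactly where the prefactor $|\omega|^{-kp}$ is singular. As written, your argument locates the wrong peak and yields the exponential rate $\lambda_1^{m/2}$ rather than $(\lambda_1/\xi_1)^{m/2}$; the missing $\xi_1^{-m/2}$, which you attribute vaguely to ``the normalization,'' is precisely the dropped factor $(\omega/2)^{-mp/2}$ evaluated at $2\xi_1$.

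The paper keeps this factor explicitly: after $\omega\mapsto 2\omega$ it writes the integrand on $(0,\pi)$ as $\omega^{-p(m/2+k)}\left(\sin^2\omega/\omega\right)^{mp/2}$, substitutes $t=\ln\left(\lambda_1(\xi_1-\omega)/\sin^2(\xi_1-\omega)\right)$, and carries the $m$-dependent weight $(\xi_1-\omega(t))^{-p(m/2+k)}$ inside the Watson integrand, extracting $\xi_1^{-p(m/2+k)}$ from its value at $t=0$. That is the step your proposal must reproduce and justify, and it is exactly the step that standard Laplace/Watson theory does not cover: near $t=0$ that weight behaves like $\xi_1^{-pm/2}e^{c m\sqrt{t}}$ with $c>0$, which shifts the effective saddle and contributes its own exponential factor, so it is not a ``slowly varying'' term contributing only $\mathcal{O}(m^{-1/2})$. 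Hence your closing claim that the remaining work is ``standard once the strict concavity and uniqueness of the maximizer are in hand'' is not accurate: the delicate point is not the bookkeeping of constants but the treatment of an exponentially large, $m$-dependent weight across the peak, and your proposal neither identifies this issue nor writes the integrand in a form in which it could be addressed.
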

\begin{proof} By
$\wh{\varphi_m^S}(\omega)=\frac{1}{\sqrt{2\pi}}(e^{-i\omega/2}\frac{\sin(\omega/2)}{\omega/2})^m$,
\[
\begin{aligned}
\|\wh{_k\varphi_m^S}\|_p^p&=
\int_\R|\omega|^{-kp}\cdot\Big|\frac{\sin(\omega/2)}{\omega/2}\Big|^{mp}d\omega
=\frac{2^{1-kp}}{(\sqrt{2\pi})^p}\int_R|\omega|^{-kp}\cdot\Big|\frac{\sin(\omega)}{\omega}\Big|^{mp}d\omega
\\
&=\frac{2^{2-kp}}{(\sqrt{2\pi})^p}\int_0^\infty\omega^{-p(m/2+k)}\cdot\left(\frac{\sin^2(\omega)}{\omega}\right)^{mp/2}d\omega
\\
&=\frac{2^{2-kp}}{(\sqrt{2\pi})^p}\left\{\int_0^\pi+\int_{\pi}^\infty\right\}\omega^{-p(m/2+k)}\cdot\left(\frac{\sin^2(\omega)}{\omega}\right)^{mp/2}d\omega
\\&
=:\frac{2^{2-kp}}{(\sqrt{2\pi})^p}(I_1+I_2).
\end{aligned}
\]
For $I_2$ with $mp>1$, we have
\[
I_2\le \int_\pi^\infty\omega^{-mp}d\omega
=\frac{1}{mp-1}\left(\frac{1}{\pi}\right)^{mp-1}.
\]
To estimate $I_1$, we use the same technique as in the proof of
\cite[Lemma~4]{Ehrich:2000}. Let $\xi_1$ be the point where
$\sin^2(\omega)/\omega$ takes its maximum value $\lambda_1$ in
$(0,\pi)$, i.e., $\xi_1=1.1655...$ is the root of the transcendental
equation $ \xi_1^{-1}-2\cot(\xi_1)=0$ and
$\lambda_1=\frac{\sin^2(\xi_1)}{\xi_1}=0.72461...$ . Separate $I_1$
to two parts as follows
\[
I_1=\left\{\int_0^{\xi_1}+\int_{\xi_1}^\pi\right\}
\left(\frac{\sin^2(\omega)}{\omega}\right)^{mp/2}\cdot\omega^{-p(m/2+k)}d\omega
=:I_{11}+I_{12}.
\]
We first estimate $I_{11}$. Let
\[
t=t(\omega)=
\ln\frac{\xi_1-\omega}{\sin^2(\xi_1-\omega)}-\ln\frac{\xi_1}{\sin^2(\xi_1)}
=\ln\frac{\lambda_1(\xi_1-\omega)}{\sin^2(\xi_1-\omega)},\quad\omega\in(0,\xi_1).
\]
Then,
\[
t(\omega)\sim a_2\omega^2+a_3\omega^3+\cdots\sim
a_2\omega^2\left(1+\frac{a_3}{a_2}\omega+\cdots\right),\quad
\omega\rightarrow0,
\]
where
\[
a_2 = \Lambda_1=
-\frac12\frac{d^2}{d\omega^2}\ln\frac{\sin^2(\xi_1-\omega)}{\xi_1-\omega}\Big{|}_{\omega=0}=
0.81597...\,.
\]
Then, similar to the proof of  \cite[Lemma~4]{Ehrich:2000}, we can
obtain
\[
\begin{aligned}
\omega = \omega(t) &\sim
(\Lambda_1)^{-1/2}\sqrt{t}(1+c_1t^{1/2}+c_2t+\cdots),
\\ \frac{d\omega}{dt}&\sim
\frac{1}{2\sqrt{\Lambda_1t}}(1+d_1t^{1/2}+d_2t+\cdots),
\\
\xi_1-\omega(t)&\sim\xi_1(1-e_1t^{1/2}-e_2 t-\cdots),
\end{aligned}
\]
for $t\rightarrow0$. Changing the variable of $I_{11}$, we have
\[
\begin{aligned}
I_{11} &= \int_0^{\xi_1}
\left(\frac{\sin^2(\omega)}{\omega}\right)^{mp/2}\cdot\omega^{-p(m/2+k)}d\omega
\\&=\int_{0}^{\xi_1}\left(\frac{\sin^2(\xi_1-\omega)}{\xi_1-\omega}\right)^{mp/2}\cdot(\xi_1-\omega)^{-p(m/2+k)}d\omega
\\&=\lambda_1^{mp/2}\int_0^\infty e^{-\frac{mp}{2}t}q(t)dt,
\end{aligned}
\]
where
\[
q(t)\sim
\left(\xi_1^{p(m/2+k)}\sqrt{\Lambda_1t}\right)^{-1}(1+f_1t^{1/2}+f_2t+\cdots).
\]
Now by Watson's lemma, we have
\[
\begin{aligned}
I_{11}&=\lambda_1^{mp/2}\cdot\left(\xi_1^{p(m/2+k)}\sqrt{\Lambda_1}\right)^{-1}\cdot\frac{\sqrt{\pi}}{\sqrt{mp/2}}\cdot(1+\mathcal{O}(m^{-1/2}))
\\&=\frac{\sqrt{2\pi}}{\sqrt{\Lambda_1mp}}\cdot(\xi_1)^{-kp}\cdot\left(\frac{\lambda_1}{\xi_1}\right)^{mp/2}\cdot(1+\mathcal{O}(m^{-1/2})).
\end{aligned}
\]
For $I_{12}$, we use
\[
t=t(\omega)=
\ln\frac{\xi_1+\omega}{\sin^2(\xi_1+\omega)}-\ln\frac{\xi_1}{\sin^2(\xi_1)}
=\ln\frac{\lambda_1(\xi_1+\omega)}{\sin^2(\xi_1+\omega)},\quad\omega\in(0,\pi-\xi_1).
\]
Similarly, we have
$
I_{12}
=\frac{\sqrt{2\pi}}{\sqrt{\Lambda_1mp}}\cdot(\xi_1)^{-kp}\cdot\left(\frac{\lambda_1}{\xi_1}\right)^{mp/2}\cdot(1+\mathcal{O}(m^{-1/2})).
$
Consequently,
$
I_{1}
=\frac{2\sqrt{2\pi}}{\sqrt{\Lambda_1mp}}\cdot(\xi_1)^{-kp}\cdot\left(\frac{\lambda_1}{\xi_1}\right)^{mp/2}\cdot(1+\mathcal{O}(m^{-1/2})).
$
Noting that
$
\frac{1}{\pi}=0.31830...<\left(\frac{\lambda_1}{\xi_1}\right)^{1/2}=0.78846...,
$
we conclude
\[
\begin{aligned}
\|\wh{_k\varphi_m^S}\|_p^p
&=\frac{2^{2-kp}}{(\sqrt{2\pi})^p}\cdot\frac{2\sqrt{2\pi}}{\sqrt{\Lambda_1mp}}\cdot(\xi_1)^{-kp}\cdot\left(\frac{\lambda_1}{\xi_1}\right)^{mp/2}\cdot(1+\mathcal{O}(m^{-1/2}))
\\
&=\frac{8}{(\sqrt{2\pi})^{p-1}}\cdot\frac{1}{\sqrt{\Lambda_1mp}}\cdot(2\xi_1)^{-kp}\cdot\left(\frac{\lambda_1}{\xi_1}\right)^{mp/2}\cdot(1+\mathcal{O}(m^{-1/2})),
\end{aligned}
\]
which completes our proof.
\end{proof}
Next, for the spline wavelet function $\psi^S_m$, we have the
following estimation.
\begin{theorem}\label{thm:Cpsi}
Let $k\in\N\cup\{0\}$ be a fixed nonnegative integer. Let $\psi_m^S$
be the semiorthogonal spline wavelet of order $m$, i.e.,
\begin{equation}\label{def:splinewavelets}
\psi_m^S(x):=\sum_{\nu=0}^{2m-2}\frac{(-1)^\nu}{2^{m-1}}N_{2m}(v+1)N_{2m}^{(m)}(2x-\nu),\quad
x\in\R.
\end{equation}
Then
\begin{equation}
\|\wh{_k\psi_m^S}\|_p=\frac{2^{3/p}}{(\sqrt{2\pi})^{1-1/p}}\cdot\frac{(2\pi-4\xi_2)^{-k}}{(\sqrt{2\Lambda_2mp})^{1/p}}\cdot\lambda_2^m\cdot(1+\mathcal{O}(m^{-1/2})).
\end{equation}
where
\begin{equation}
\begin{aligned}
\lambda_2 &=
\frac{\sin^2(\xi_2-\pi/2)\sin^2(\xi_2)}{(\pi/2-\xi_2)\xi_2^2}=0.69706...\\
\Lambda_2&=
-\frac12\frac{d^2}{du^2}\ln\frac{\sin^2(u-\pi/2)\sin^2(u)}{(\pi/2-u)u^2}\Big{|}_{u=\xi_2}=1.2229...,
\end{aligned}
\end{equation}
and $\xi_2=0.2853...$ is the unique solution of the transcendental
equation
\[
(2\pi\xi_2-4\xi_2^2)\cos(2\xi_2)+(3\xi_2-\pi)\sin(2\xi_2)=0,\quad
\xi\in(0,\pi/2).
\]
\end{theorem}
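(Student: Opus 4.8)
The plan is to reduce $\|\wh{_k\psi_m^S}\|_p^p=\int_\R|\omega|^{-kp}|\wh{\psi_m^S}(\omega)|^p\,d\omega$ to a one-dimensional Laplace integral and then argue as for Theorem~\ref{thm:Cphi}. First I would obtain a closed form for $\wh{\psi_m^S}$. Taking the Fourier transform of \eqref{def:splinewavelets}, using $\wh{N_{2m}^{(m)}}(\xi)=(\iu\xi)^m\wh{N_{2m}}(\xi)$, and recognizing $\sum_{\nu=0}^{2m-2}(-1)^\nu N_{2m}(\nu+1)e^{-\iu\nu\omega/2}=E_{2m-1}(-e^{-\iu\omega/2})/(2m-1)!$ from the definition of the Euler--Frobenius polynomial, yields
\[
\wh{\psi_m^S}(\omega)=\frac{1}{2^m}\,(\iu\omega/2)^m\,\wh{N_{2m}}(\omega/2)\,\frac{E_{2m-1}(-e^{-\iu\omega/2})}{(2m-1)!}.
\]
Taking moduli, inserting the periodization formula for $\sum_{\ell}|\wh{N_m}(\theta+2\pi\ell)|^2$ (as in the proof of Theorem~\ref{thm1:spline}) at $\theta=\omega/2+\pi$ so that $\sin(\theta/2)=\cos(\omega/4)$, using $2\sin(\omega/4)\cos(\omega/4)=\sin(\omega/2)$, and collecting the powers of $2$ and $|\omega|$ — with care for the $2\pi$ normalization of $\wh{N_m}$ — I expect
\[
|\wh{\psi_m^S}(\omega)|=\frac{2^{2m}}{\sqrt{2\pi}}\,\frac{|\sin(\omega/2)|^{2m}}{|\omega|^m}\sum_{\ell\in\Z}\frac{1}{|\omega/2+\pi+2\pi\ell|^{2m}}.
\]

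Since $|\wh{\psi_m^S}|$ is even, I write $\|\wh{_k\psi_m^S}\|_p^p=2\int_0^\infty\omega^{-kp}|\wh{\psi_m^S}(\omega)|^p\,d\omega$. For each $\omega$ the periodization sum is dominated, by an exponentially large factor, by its smallest-denominator term; on $(0,2\pi)$ this is the $\ell=-1$ term $|\omega/2-\pi|^{-2m}$, so up to a $1+o(1)$ factor $|\wh{\psi_m^S}(\omega)|\approx(2\pi)^{-1/2}H(\omega)^m$ with rate function
\[
H(\omega):=\frac{4\sin^2(\omega/2)}{\omega\,(\omega/2-\pi)^2}.
\]
Because $H(\omega)\le C\omega^{-3}$ for large $\omega$, the integral concentrates on the first lobe, and the tails ($\omega>2\pi$, and the part where $H$ is bounded away from its maximum) are exponentially negligible exactly as in the estimate of $I_2$ in Theorem~\ref{thm:CmpsimD}. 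The substitution $\omega=2\pi-4u$ is what matches the statement: it gives
\[
H(2\pi-4u)=\frac{\sin^2(u-\pi/2)\,\sin^2(u)}{u^2\,(\pi/2-u)}=:g(u),
\]
and $(\ln g)'(u)=0$, after clearing denominators, is precisely $(2\pi u-4u^2)\cos(2u)+(3u-\pi)\sin(2u)=0$, whose unique root in $(0,\pi/2)$ is $\xi_2$. Thus the maximum of $H$ sits at $\omega^*=2\pi-4\xi_2\in(\pi,2\pi)$ and $H(\omega^*)=g(\xi_2)=\lambda_2$.

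It remains to evaluate $2(2\pi)^{-p/2}\int_0^\infty\omega^{-kp}H(\omega)^{mp}\,d\omega$ by Laplace's method, which I would carry out with the same Watson's-lemma change of variable as in Theorem~\ref{thm:Cphi}. In the variable $u$ the integral is $8(2\pi)^{-p/2}\int g(u)^{mp}(2\pi-4u)^{-kp}\,du$, concentrated at $u=\xi_2$; setting $t=\ln(\lambda_2/g(u))\sim\Lambda_2(u-\xi_2)^2$ with $\Lambda_2=-\tfrac12(\ln g)''(\xi_2)$, freezing the slowly varying factor as $(2\pi-4\xi_2)^{-kp}$, and applying Watson's lemma gives the Gaussian contribution $\sqrt{\pi/(\Lambda_2 mp)}\,(1+\mathcal O(m^{-1/2}))$. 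Assembling the constants yields
\[
\|\wh{_k\psi_m^S}\|_p^p=8\,(2\pi)^{-p/2}\,(2\pi-4\xi_2)^{-kp}\,\lambda_2^{mp}\,\sqrt{\tfrac{\pi}{\Lambda_2 mp}}\,(1+\mathcal O(m^{-1/2})),
\]
and taking $p$-th roots reproduces the claimed expression (using $8^{1/p}=2^{3/p}$ and the identity $(2\pi)^{-1/2}(\pi/(\Lambda_2 mp))^{1/(2p)}=(\sqrt{2\pi})^{-(1-1/p)}(\sqrt{2\Lambda_2 mp})^{-1/p}$).

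The real work lies in the first two steps rather than in the asymptotics. Deriving the closed form with the correct power-of-two and $2\pi$ normalization is delicate, and one must recognize that the periodization sum is governed by the single term $\ell=-1$; it is this term, via the substitution $\omega=2\pi-4u$, that produces the two sine factors and the denominator $u^2(\pi/2-u)$ of $g$, hence the transcendental equation for $\xi_2$ and the value $\lambda_2$. The conceptually nonobvious point is that the maximum controlling the integral lies in $(\pi,2\pi)$, at $\omega^*=2\pi-4\xi_2$, where $|\omega|^{-kp}$ is \emph{not} large, and not near the origin; once $|\wh{\psi_m^S}(\omega)|\approx(2\pi)^{-1/2}H(\omega)^m$ with this peak is in hand, the Watson-lemma computation of Theorem~\ref{thm:Cphi} transfers essentially verbatim.
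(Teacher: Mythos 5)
Your proposal is correct and follows essentially the same route as the paper: the same Euler--Frobenius closed form for $|\wh{\psi_m^S}|$, the same reduction of the periodization sum to its dominant $|\omega/2-\pi|^{-2m}$ term (the paper does this via Ehrich's Lemma~3 with explicit remainders $R_1,r(u)$), the same substitution $u=\pi/2-\omega/4$ producing $g(u)$, $\xi_2$, $\lambda_2$, $\Lambda_2$, and the same Watson's-lemma evaluation; your final constant agrees with the paper's. The only cosmetic difference is that you halve the domain by evenness where the paper splits $\R$ into five intervals and identifies $I_4=I_2+I_3$ by the symmetry $u\mapsto\pi-u$.
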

\begin{proof}
Using the Fourier transform of the B-spline and the definition of
Euler-Frobenius polynomial $E_{2m-1}(z)$ for $z=e^{i\omega}$:
\[
\begin{aligned}
\frac{E_{2m-1}(z)}{(2m-1)!}&=\sum_{\nu=0}^{2m-2}N_{2m}(\nu+1)z^\nu
=e^{-i(m-1)\omega}(2\sin(\omega/2))^{2m}\sum_{l=-\infty}^\infty\frac{1}{(\omega+2\pi
l)^{2m}},
\end{aligned}
\]
we can derive (c.f. \cite[Lemma~4]{Ehrich:2000})
\[
\begin{aligned}
|\wh{_k\psi_m^S}(\omega)|
&=\frac{2^{-2k}}{\sqrt{2\pi}}\Big|\frac{\sin^2(\omega/4)}{\omega/4}\Big|^m\Big|\frac{\omega}{4}\Big|^{-k}
\Big|\frac{E_{2m-1}(\wt{z})}{(2m-1)!}\Big|
\\&=\frac{2^{-2k}}{\sqrt{2\pi}}\Big|\frac{\sin^2(\omega/4)}{\omega/4}\Big|^m\Big|\frac{\omega}{4}\Big|^{-k}
\Big|2\sin(\wt\omega/2)\Big|^{2m}\Big|\sum_{\ell=-\infty}^\infty\frac{1}{(\wt\omega+2\pi
\ell)^{2m}}\Big|,
\end{aligned}
\]
where $\wt z = e^{i\wt\omega}$ and $\wt\omega=\pi-\omega/2$. Then,
\[
\begin{aligned}
\|\wh{_k\psi_m^S}\|_p^p& = \frac{2^{-2kp}}{(\sqrt{2\pi})^p}\int_\R
\Big|\frac{\sin^2(\omega/4)}{\omega/4}\Big|^{mp}\Big|\frac{\omega}{4}\Big|^{-kp}
\Big|2\sin(\wt\omega/2)\Big|^{2mp}\Big|\sum_{l=-\infty}^\infty\frac{1}{(\wt\omega+2\pi
l)^{2m}}\Big|^pd\omega\\
&= \frac{2^{-2kp}}{(\sqrt{2\pi})^p}\int_\R\left[
\left(\frac{\sin^2(u-\pi/2)}{u-\pi/2}\right)^{2
m}\left(u-\pi/2\right)^{-2k} \left(2\sin(u)\right)^{4m}\right.
\\&\qquad\qquad\quad\times\left.
\left(\sum_{l=-\infty}^\infty\frac{1}{(2u+2\pi
l)^{2m}}\right)^2\right]^{p/2}4du
\\
&=
 \frac{4\cdot2^{-2kp}}{(\sqrt{2\pi})^p}\left\{\int_{-\infty}^{-\pi/2}+\int_{-\pi/2}^{\xi_2}+\int_{\xi_2}^{\pi/2}+\int_{\pi/2}^{3\pi/2}+\int_{3\pi/2}^\infty\right\}
\left[ \left(\frac{\sin^2(u-\pi/2)}{u-\pi/2}\right)^{2
m}\right.\\&\qquad\qquad\quad\times\left.\left(u-\pi/2\right)^{-2k}
\left(\sin(u)\right)^{4m}
\left(\sum_{\ell=-\infty}^\infty\frac{1}{(u+\pi
\ell)^{2m}}\right)^2\right]^{p/2}du \\
& =:
 \frac{4\cdot2^{-2kp}}{(\sqrt{2\pi})^p}(I_1+I_2+I_3+I_4+I_5),
\end{aligned}
\]
Here, $\xi_2$ is the point where the function
\[
g(u):=\frac{\sin^2(u-\pi/2)\sin^2(u)}{(\pi/2-u)u^2}
\] takes its
maximum value in $(0,\pi/2)$, i.e., $\xi_2=0.28532...$ is the root
of the transcendental equation
\[
h(u):=(2\pi u-4u^2)\cos(2u)+(3u-\pi)\sin(2u).
\]
Note that $g'(u)=\frac{\sin(2u)}{4(\pi/2-u)^2u^4}\cdot h(u)$ and
$\lambda_2=g(\xi_2)=0.69706...$.

We first estimate $I_2$. By \cite[Lemma~3]{Ehrich:2000}, we have
\[
\begin{aligned}
I_2 =
\int_{-\pi/2}^{\xi_2}[g(u)^{2m}(u-\pi/2)^{-2k}(1+R_1+r(u))^2]^{p/2}du=:I_{21}+\wt{R},
\end{aligned}
\]
where $|R_1|\le(2m-1)^{-1}$,
\[
r(u)=
\begin{cases}
         \left(\frac{u}{\pi+u}\right)^{2m}, & -\pi/2<u\le0,\\
          \left(\frac{u}{\pi-u}\right)^{2m}, &0\le u <\xi_2.
 \end{cases}
%
\]
\[
I_{21}:=\int_{-\pi/2}^{\xi_2}[g(u)^{2m}(u-\pi/2)^{-2k}(1+R_2(u))^2]^{p/2}du,
\]
where
\[
R_2(u)= \left\{
        \begin{array}{ll}
          R_1+r(u), & \hbox{$-\pi/2+\delta<u<\xi_2$,} \\
          R_1, & \hbox{$-\pi/2< u <-\pi/2+\delta$.}
        \end{array}
      \right.
\]
$0<\delta<\pi/2-\xi_2$ is fixed. Hence
\[
|R_2(u)|\le
\frac{1}{2m-1}+\left(\frac{\pi/2-\delta}{\pi/2+\delta}\right)^{2m},
\]
and
\[
\begin{aligned}
\wt{R} &=
\int_{-\pi/2}^{-\pi/2+\delta}[g(u)^{2m}(u-\pi/2)^{-2k}]^{p/2}\cdot[(1+R_1+r(u))^p-(1+R_1)^p]du.
\\
&\le (p2^p+o(1))
\int_{-\pi/2}^{-\pi/2+\delta}[g(u)^{2m}(u-\pi/2)^{-2k}]^{p/2}du \\
&\le(p2^p+o(1))\delta\cdot\left[\frac{\sin^{4m}\delta}{(\pi-\delta)^{2m+2k}}\right]^{p/2}
\le(p2^p+o(1))\delta\cdot\frac{\sin^{2mp}\delta}{(\pi-\delta)^{p(m+k)}}.
\end{aligned}
\]
For the estimation of $I_{21}$, we shall employ the Watson's lemma.
We introduce
\[
t=t(v):=\ln g(\xi_2)-\ln
g(\xi_2-v)=\ln\frac{\lambda_2}{g(\xi_2-v)},\quad\frac{dt}{dv}=\frac{g'(\xi-v)}{g(\xi-v)},
\]
for $v\in[0,\pi/2+\xi_2]$. We have $t\rightarrow0$ as
$v\rightarrow0$ and $t$ goes from $0$ to $\infty$ monotonically as
$v$ increases from $0$ to $\pi/2+\xi_2$. We can state the asymptotic
expansion of  $t(v)$ near $v=0$ as follows:
\[
t(v)\sim a_2v^2+a_3v^3+\cdots\sim a_2v^2(1+a_3/a_2 v+\cdots),
\]
where
\[
a_2 = \Lambda_2 = -\frac12\frac{d^2}{dv^2}\ln
g(\xi_2-v)\Big|_{v=0}=-\frac{h'(\xi_2)}{2\xi_2(\pi/2-\xi_2)\sin(2\xi_2)}=1.2229....
\]
Let $s=\sqrt{t}$. Then
\[
s(v)\sim\sqrt{\Lambda_2}v(1+b_1v+\cdots), \quad v\rightarrow0.
\]
Now $s'(v)\neq0$, we can reverse this expansion,
\[
v=v(t)\sim \Lambda_2^{-1/2}s(1+c_1s+c_2s^2+\cdots)\sim
\Lambda_2^{-1/2}t^{1/2}(1+c_1t^{-1/2}+c_2t+\cdots).
\]
Also,
\[
\frac{dv}{dt} =
\frac{(\pi/2+v-\xi_2)(\xi_2-v)\sin2(\xi_2-v)}{h(\xi_2-v)}
\]
Asymptotic expansion of numerator and denominator at $v=0$ and
division yields
\[
\begin{aligned}
\frac{dv}{dt} &\sim
\frac{(\pi/2-\xi_2)\xi\sin(2\xi)}{-h'(\xi_2)v(t)}(1+d_1v(t)^2+\cdots)
\\
&\sim \frac{1}{2\Lambda_2v(t)}(1+d_1 v(t)^2+\cdots)
\\
&\sim \frac{1}{2\sqrt{\Lambda_2t}}(1+ e_1t^{1/2}+e_2t+\cdots).
\end{aligned}
\]
Now changing the variable in $I_{21}$ and noting
$g(\xi_2-v)=\lambda_2e^{-t}$, we have
\[
\begin{aligned}
I_{21}&\sim \int_{-\pi/2}^{\xi_2}[g(u)^{2m}(u-\pi/2)^{-2k}]^{p/2}du
\\&=\lambda_2^{mp}\int_{0}^{\xi_2+\pi/2}[(g(\xi_2-v)/\lambda_2)^{2m}(\xi_2-v-\pi/2)^{-2k}]^{p/2}dv
\\&=\lambda_2^{mp}\int_0^\infty e^{-mpt}q(t)dt,
\end{aligned}
\]
where
\[
\begin{aligned}
q(t)&=(\pi/2+v(t)-\xi_2)^{-kp}\cdot\frac{dv}{dt}
\\
&\sim
\frac{(\pi/2-\xi_2)^{-kp}}{2\sqrt{\Lambda_2t}}(1+f_1t^{1/2}+f_2t+\cdots)^{-kp}(1+e_1t^{1/2}+e_2t+\cdots)\\
&\sim
\frac{(\pi/2-\xi_2)^{-kp}}{2\sqrt{\Lambda_2t}}(1+g_1t^{1/2}+g_2t+\cdots).
\end{aligned}
\]
By Watson's lemma  and choosing $\delta$ such that
$\sin^2\delta/(\pi-\delta)<\lambda_2$, we conclude that
\[
I_2\sim I_{21}\sim
\lambda_2^{mp}\cdot\frac{(\pi/2-\xi_2)^{-kp}}{2\sqrt{\Lambda_2}}\cdot\frac{\sqrt{\pi}}{\sqrt{mp}}\cdot(1+\mathcal{O}(m^{-1/2})).
\]
Similarly, we can estimate the asymptotic behavior of $I_3$. We use
\[
t=t(v)=\ln g(\xi_2)-\ln g(\xi+v) = \ln
\frac{\lambda_2}{g(\xi+v)},\quad v\in(0,\pi/2-\xi_2).
\]
Same technique implies
\[
I_3\sim
\lambda_2^{mp}\cdot\frac{(\pi/2-\xi_2)^{-kp}}{2\sqrt{\Lambda_2}}\cdot\frac{\sqrt{\pi}}{\sqrt{mp}}\cdot(1+\mathcal{O}(m^{-1/2})).
\]

Next, for $I_4$, observing the period of
$\sum_{l=-\infty}^\infty\frac{1}{(u+\pi l)^{2m}}$ is $\pi$, we have
\[
\begin{aligned}
I_4 &\;\;\,= \int_{\pi/2}^{3\pi/2} \left[
\left(\frac{\sin^2(u-\pi/2)\sin^2(u)}{u-\pi/2}\right)^{2
m}\left(u-\pi/2\right)^{-2k}
\left(\sum_{\ell=-\infty}^\infty\frac{1}{(u+\pi
\ell)^{2m}}\right)^2\right]^{p/2}du
\\
& \stackrel{{u\rightarrow \pi-u}}{=} \int_{-\pi/2}^{\pi/2} \left[
\left(\frac{\sin^2(u-\pi/2)\sin^2(u)}{u-\pi/2}\right)^{2
m}\left(u-\pi/2\right)^{-2k}
\left(\sum_{\ell=-\infty}^\infty\frac{1}{(u+\pi
\ell)^{2m}}\right)^2\right]^{p/2}du
\\&\;\;\; = I_2+I_3.
\end{aligned}
\]
Consequently,
\[
I_4 \sim
2\lambda_2^{mp}\cdot\frac{(\pi/2-\xi_2)^{-kp}}{2\sqrt{\Lambda_2}}\cdot\frac{\sqrt{\pi}}{\sqrt{mp}}\cdot(1+\mathcal{O}(m^{-1/2})).
\]
Next, we estimate $I_5$. By $
E_{2m-1}(z)=(2m-1)!\sum_{\nu=0}^{2m-2}N_{2m}(\nu+1)z^\nu$, we derive
that $|E_{2m-1}(z)|\le(2m-1)!$ for $|z|=1$ and
\[
\begin{aligned}
I_5 &= \int_{3\pi/2}^\infty
 \left[
\left(\frac{\sin^2(u-\pi/2)}{u-\pi/2}\right)^{2
m}\left(u-\pi/2\right)^{-2k}
\frac{|E_{2m-1}(e^{2iu})|}{(2m-1)!}\right]^{p/2}du
\\
&\le \int_{\pi}^\infty
 \left[
\left(\frac{\sin^2(u)}{u}\right)^{2 m}u^{-2k} \right]^{p/2}du
\le
\frac{1}{\pi^{2k}}\int_{\pi}^\infty u^{-mp}du
\\&\le
\frac{1}{(mp-1)\pi^{2k}}\left(\frac{1}{\pi}\right)^{mp-1},\quad
mp-1>0.
\end{aligned}
\]
Similarly,
\[
\begin{aligned}
I_1 &= \int_{-\infty}^{-\pi/2}
 \left[
\left(\frac{\sin^2(u-\pi/2)}{u-\pi/2}\right)^{2
m}\left(u-\pi/2\right)^{-2k}
\frac{|E_{2m-1}(e^{2iu})|}{(2m-1)!}\right]^{p/2}du
\\
&\le \int_{-\infty}^{-\pi}
 \left[
\left(\frac{\sin^2(u)}{u}\right)^{2 m}u^{-2k} \right]^{p/2}du
\le
\frac{1}{(mp-1)\pi^{2k}}\left(\frac{1}{\pi}\right)^{mp-1},\quad
mp-1>0.
\end{aligned}
\]
In summary, we have
\[
I_1\sim I_5\le
\frac{1}{(mp-1)\pi^{2k}}\left(\frac{1}{\pi}\right)^{mp-1}
\]
and
\[
I_2\sim I_3\sim \frac12 I_4 \sim
\lambda_2^{mp}\cdot\frac{(\pi/2-\xi_2)^{-kp}}{2\sqrt{\Lambda_2}}\cdot\frac{\sqrt{\pi}}{\sqrt{mp}}\cdot(1+\mathcal{O}(m^{-1/2})).
\]
Due to $\frac{1}{\pi}=0.31830...\le \lambda_2 = 0.69706...$, we
conclude that
\[
\begin{aligned}
\|\wh{_k\psi_m^S}\|_p^p &=
\frac{4\cdot2^{-2kp}}{\sqrt{2\pi}^{p}}\cdot 4
\lambda_2^{mp}\cdot\frac{(\pi/2-\xi_2)^{-kp}}{2\sqrt{\Lambda_2}}\cdot\frac{\sqrt{\pi}}{\sqrt{mp}}\cdot(1+\mathcal{O}(m^{-1/2}))
\\& = \frac{8}{\sqrt{2\pi}^{p-1}}\cdot\frac{(2\pi-4\xi_2)^{-kp}}{\sqrt{2\Lambda_2mp}}\cdot
\lambda_2^{mp}\cdot(1+\mathcal{O}(m^{-1/2})),
\end{aligned}
\]
which completes our proof.
\end{proof}

Finally, when $k=m$, we obtain the following estimation.
\begin{theorem}\label{thm:CmPsimS}
Let $\psi_m^S$ be the spline wavelet defined in
\eqref{def:splinewavelets}. Then
\begin{equation}\label{eq:mPsimS}
\|\wh{_m\psi_m^S}\|_p =
\frac{2^{1/p}}{(\sqrt{2\pi})^{1-1/p}}\cdot\left(\frac{\pi}{\sqrt{\pi^2-8}}\right)^{1/p}\cdot\frac{1}{(\sqrt{2mp})^{1/p}}\cdot\left(\frac{16}{\pi^4}\right)^{m}\cdot(1+\mathcal{O}(m^{-1/2})).
\end{equation}
\end{theorem}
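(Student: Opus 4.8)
The plan is to reuse the architecture of the proof of Theorem~\ref{thm:Cpsi}, specializing its integral representation to $k=m$ and tracking the fact that the exponent now migrates into the exponential growth rate of the integrand. Setting $k=m$ in that representation gives
\[
\|\wh{_m\psi_m^S}\|_p^p=\frac{4\cdot 2^{-2mp}}{(\sqrt{2\pi})^p}\int_\R\left[\left(\frac{\sin^2(u-\pi/2)\sin^2 u}{u-\pi/2}\right)^{2m}(u-\pi/2)^{-2m}\left(\sum_{\ell=-\infty}^\infty\frac{1}{(u+\pi\ell)^{2m}}\right)^{2}\right]^{p/2}du .
\]
The essential difference from the case of fixed $k$ is that the factor $(u-\pi/2)^{-2k}=(u-\pi/2)^{-2m}$ is no longer a subexponential correction absorbed into the Watson amplitude $q(t)$, but contributes to the exponential rate; consequently the location of the dominant saddle must be recomputed.

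On the central band $(-\pi/2,\pi/2)$ I would replace the periodic sum by its leading $\ell=0$ term $u^{-2m}$ (near the peak the next term is smaller by the factor $(1/3)^{2m}$), and use $\sin(u-\pi/2)=-\cos u$ together with $\sin u\cos u=\tfrac{1}{2}\sin(2u)$ to collapse the bracket to $2^{-4m}\Psi(u)^{2m}$, where
\[
\Psi(u):=\frac{\sin^2(2u)}{u^2(u-\pi/2)^2}.
\]
Since $\Psi(\pi/2-u)=\Psi(u)$, the point $u_0=\pi/4$ is a critical point, and from $(\ln\Psi)''(\pi/4)=-8+64/\pi^2<0$ it is the interior maximum, with $\Psi(\pi/4)=256/\pi^4$. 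Because $2^{-4}\cdot 256/\pi^4=16/\pi^4$, this is precisely the value forcing the geometric factor $(16/\pi^4)^m$ in the statement.

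Then I would run Watson's lemma at $u_0=\pi/4$ exactly as in Theorem~\ref{thm:Cpsi}, introducing $t=\ln\Psi(\pi/4)-\ln\Psi(u)\sim a_2(u-\pi/4)^2$ with
\[
a_2=-\tfrac{1}{2}(\ln\Psi)''(\pi/4)=4-\frac{32}{\pi^2}=\frac{4(\pi^2-8)}{\pi^2},
\]
inverting to $du/dt\sim(2\sqrt{a_2 t})^{-1}$ and picking up the $\sqrt{\pi}/\sqrt{mp}$ decay together with $\sqrt{1/a_2}=\pi/(2\sqrt{\pi^2-8})$, which is the source of the factor $(\pi/\sqrt{\pi^2-8})^{1/p}$. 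The tail regions outside the central band are bounded exactly as the terms $I_1,I_5$ of Theorem~\ref{thm:Cpsi} via $|E_{2m-1}(z)|\le(2m-1)!$ for $|z|=1$, giving $\mathcal{O}(\pi^{-2mp})$, which is negligible against the main contribution of order $(64/\pi^4)^{mp}$ since $64/\pi^4>\pi^{-2}$. Collecting the prefactor $4\cdot 2^{-2mp}/(\sqrt{2\pi})^p$, the factor $2^{-4m}$, the quantity $\Psi(\pi/4)^{mp}$ and the constant $a_2$, and taking the $p$-th root, produces the stated asymptotics.

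The main obstacle is exactly that raising $k$ to $m$ relocates the saddle from the transcendental point $\xi_2$ of Theorem~\ref{thm:Cpsi} to the explicit value $\pi/4$, so the delicate part is to re-justify, rather than merely re-quote, the Laplace/Watson reduction at the new peak: one must verify that $\Psi$ is unimodal on $(0,\pi/2)$ with maximum at $\pi/4$, that the $\ell=0$ term dominates the periodic sum uniformly on a fixed neighbourhood of $\pi/4$, and that the coefficients in the asymptotic inversion of $t(u)$ still yield an overall $(1+\mathcal{O}(m^{-1/2}))$ once the extra $(u-\pi/2)^{-2m}$ factor has been folded into the exponential rate.
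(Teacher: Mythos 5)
Your proposal is correct and follows essentially the same route as the paper's own proof: the same integral representation in the variable $u=\pi/2-\omega/4$, the same reduction of the periodic sum to its $\ell=0$ term, Laplace/Watson's method at the peak $u=\pi/4$ of $g(u)=\tfrac14\Psi(u)$ with value $64/\pi^4$ (hence $(16/\pi^4)^{mp}$ after the $2^{-2mp}$ prefactors), the same curvature constant $a_2=4(\pi^2-8)/\pi^2$ producing $\pi/\sqrt{\pi^2-8}$, and the same tail bound via $|E_{2m-1}(z)|\le(2m-1)!$. The only cosmetic difference is that the paper writes $\wh{_m\psi^S_m}$ directly from $_m\psi^S_m=\sum_\nu\frac{(-1)^\nu}{2^{m-1}}N_{2m}(\nu+1)N_{2m}(2x-\nu)$ rather than specializing the $k$-dependent representation of Theorem~\ref{thm:Cpsi} at $k=m$, which yields the identical integrand.
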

\begin{proof}
By definition, $_m\psi_m^S(x) =
\sum_{\nu=0}^{2m-2}\frac{(-1)^\nu}{2^{m-1}}N_{2m}(\nu+1)N_{2m}(2x-\nu)$.
Hence,
\[
\begin{aligned}
|\wh{_m\psi_m^S}(\omega)|
&=\frac{2^{-2m}}{\sqrt{2\pi}}\left(\frac{\sin(\omega/4)}{\omega/4}\right)^{2m}\frac{|E_{2m-1}(\wt
z)|}{(2m-1)!}
\\
&
=\frac{2^{-2m}}{\sqrt{2\pi}}\left(\frac{\sin(\omega/4)}{\omega/4}\right)^{2m}(2\sin(\wt\omega/2))^{2m}\left|\sum_{l=-\infty}^\infty\frac{1}{(\wt\omega+2\pi
l)^{2m}}\right|,
\end{aligned}
\]
where $E_{2m-1}$ is the Euler-Frobenius polynomial, $\wt z =
e^{i\wt\omega}$, and $\wt\omega= \pi-\omega/2$. Setting $u
=\wt\omega/2 = \pi/2-\omega/4$, we obtain
\[
\begin{aligned}
\|\wh{_m\psi_m^S}\|_p^p &= \frac{4\cdot
2^{-2mp}}{(\sqrt{2\pi})^p}\int_\R\left[\left(\frac{\sin(u-\pi/2)}{u-\pi/2}\right)^{4m}
\cdot(\sin(u))^{4m}\cdot \left(
\sum_{\ell=-\infty}^\infty\frac{1}{(u+\pi
\ell)^{2m}}\right)^2\right]^{p/2}du
\\& =
 \frac{4\cdot
2^{-2mp}}{(\sqrt{2\pi})^p}\left\{\int_{-\infty}^{-\pi/2}+\int_{-\pi/2}^{\pi/4}+\int_{\pi/4}^{\pi}+\int_{\pi}^{\infty}\right\}
\left[\left(\frac{\sin(u-\pi/2)\sin(u)}{u-\pi/2}\right)^{4m}\right.
\\&
\qquad\qquad\qquad\times\left.\left(
\sum_{\ell=-\infty}^\infty\frac{1}{(u+\pi
\ell)^{2m}}\right)^2\right]^{p/2}du
\\& =:
 \frac{4\cdot
2^{-2mp}}{(\sqrt{2\pi})^p}(I_1+I_2+I_3+I_4).
\end{aligned}
\]
Let
\[
g(u):=\left(\frac{\sin(u-\pi/2)\sin(u)}{(u-\pi/2)u}\right)^2.
\]
Then $g$ is symmetric about $u=\pi/4$ and $g(u)\le g(\pi/4) =
64/\pi^4$. Similarly, using \cite[Lemma~3]{Ehrich:2000}, we have
\[
I_2\sim \int_{-\pi/2}^{\pi/4}(g(u))^{mp}du
\]
Introducing
\[
t=t(v)=\ln\frac{g(\pi/4)}{g(\pi/4-v)}, \quad v\in[0,\frac34\pi],
\]
we can derive
\[
q(t):=\frac{dv}{dt}\sim
\frac{\pi}{4}(\pi^2-8)^{-1/2}t^{-1/2}(1+e_1t^{1/2}+e_2t+\cdots).
\]
Changing the variable $u\rightarrow \pi/4-v$ in $I_2$ and using
Watson's lemma, we deduce
\[
\begin{aligned}
 \frac{4\cdot
2^{-2mp}}{(\sqrt{2\pi})^p}I_2 &\sim \frac{4\cdot
2^{-2mp}}{(\sqrt{2\pi})^p}[g(\pi/4)]^{mp}\int_0^\infty e^{-mpt}
q(t)dt \\&\sim
\frac{1}{\sqrt{2\pi}^{p-1}}\cdot\frac{\pi}{\sqrt{\pi^2-8}}
\left(\frac{16}{\pi^4}\right)^{mp}\cdot\frac{1}{\sqrt{2mp}}\cdot(1+\mathcal{O}(m^{-1/2}))
\end{aligned}
\]
It is easily seen that $I_3=I_2$  due to the symmetry of $g(u)$.
Also, by the symmetry, we have $I_1=I_4$. Using the fact that
$|E_{2m-1}(z)|\le (2m-1)!$ for $|z| = 1$, we have
\[
\begin{aligned}
 \frac{4\cdot
2^{-2mp}}{(\sqrt{2\pi})^p}I_4&=
 \frac{
2^{-2mp}}{(\sqrt{2\pi})^p}\int_{\pi}^{\infty}\left(\frac{\sin(u-\pi/2)}{u-\pi/2}\right)^{2mp}\left(\frac{|E_{2m-1}(\wt
z)|}{(2m-1)!}\right)^pd\omega
\\
&\le \frac{ 2^{-2mp}}{(\sqrt{2\pi})^p}
\int_{\pi}^{\infty}\left(\frac{\sin(u-\pi/2)}{u-\pi/2}\right)^{2mp}d\omega
\le\frac{ 2^{-2mp}}{(\sqrt{2\pi})^p}
\int_{\pi/2}^{\infty}\left(\frac{1}{\omega}\right)^{2mp}d\omega
\\&\le
\frac{
2^{-2mp}}{(\sqrt{2\pi})^p}\frac{1}{2mp-1}\left(\frac{2}{\pi}\right)^{2mp-1}
 =\frac{1
}{2mp-1}\frac{1}{(\sqrt{2\pi})^{p-2}}\left(\frac{1}{\pi^2}\right)^{mp}.
\end{aligned}
\]
Noting that $1/\pi^2\le 16/\pi^4$, we conclude that
\[
\begin{aligned}
 \|\wh{_m\psi_m^S}\|_p^p =
\frac{2}{\sqrt{2\pi}^{p-1}}\cdot\frac{\pi}{\sqrt{\pi^2-8}}
\cdot\frac{1}{\sqrt{2mp}}\cdot\left(\frac{16}{\pi^4}\right)^{mp}\cdot(1+\mathcal{O}(m^{-1/2})),
\end{aligned}
\]
which completes our proof.
\end{proof}

\subsection{Comparison of Daubechies orthormal Wavelets and Semiorthogonal Wavelets}
Now, by the results we obtained in the above two subsections, we can
compare the Daubechies orthonormal wavelets and the semiorthogonal
spline wavelets using the constants $C_{k,p}(f)$. Note that both
Daubechies orthonormal wavelets and the semiorthogonal spline
wavelets have the same support length and number of vanishing
moments, thereby a comparison is possible in this respect.

We first consider the situation when $k$ is fixed and let
$m\rightarrow\infty$. For Daubechies family, by
Theorem~\ref{thm:CphiD} and \eqref{eq:kPsiD}, we can deduces the
following result.

\begin{corollary}\label{cor:CPhiPsiD}
Let $\varphi^D_m$ and $\psi^D_m$ be the Daubechies orthonormal
scaling function and wavelet function of order $m$, respectively.
Let $k\ge0$ be a nonnegative integer.  Then
\begin{equation}
\label{eq:CphiD}
\lim_{m\rightarrow\infty}C_{-k,p}(\varphi_m^D)=\lim_{m\rightarrow\infty}
\frac{\|\wh{_{-k}\varphi_m^D}\|_p}{\|{\wh{\varphi_m^D}}\|_p}=\frac{\pi^{k}}{(1+pk)^{1/p}}
\end{equation}
and
\begin{equation}
\label{eq:CpsiD}
\lim_{m\rightarrow\infty}C_{k,p}(\psi_m^D)=\lim_{m\rightarrow\infty}
\frac{\|\wh{_k\psi_m^D}\|_p}{\|{\wh{\psi_m^D}}\|_p}=\pi^{-k}\left(\frac{1-2^{1-pk}}{pk-1}\right)^{1/p}.
\end{equation}
\end{corollary}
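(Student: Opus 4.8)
The plan is to establish Corollary~\ref{cor:CPhiPsiD} by combining the asymptotic norm estimates already proved earlier in the paper with a routine computation of the denominators $\|\wh{\varphi_m^D}\|_p$ and $\|\wh{\psi_m^D}\|_p$. By the definition \eqref{def:Ckp2}, we have $C_{-k,p}(\varphi_m^D)=\|\wh{_{-k}\varphi_m^D}\|_p/\|\wh{\varphi_m^D}\|_p$ and $C_{k,p}(\psi_m^D)=\|\wh{_k\psi_m^D}\|_p/\|\wh{\psi_m^D}\|_p$, so both limits reduce to ratios of quantities whose asymptotics are known. The numerators are supplied directly: Theorem~\ref{thm:CphiD} gives $\lim_{m\to\infty}\|\wh{_{-k}\varphi_m^D}\|_p=\pi^k(2\pi)^{1/p-1/2}(1+pk)^{-1/p}$, and the Babenko--Spektor result \eqref{eq:kPsiD} gives $\lim_{m\to\infty}\|\wh{_k\psi_m^D}\|_p=(2\pi)^{1/p-1/2}\pi^{-k}\bigl((1-2^{1-pk})/(pk-1)\bigr)^{1/p}$.

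First I would compute the limiting denominators by taking $k=0$ in these same formulas. Setting $k=0$ in Theorem~\ref{thm:CphiD} yields $\lim_{m\to\infty}\|\wh{\varphi_m^D}\|_p=(2\pi)^{1/p-1/2}$, which matches the heuristic that $\wh{\varphi_m^D}\to\Phi=\frac{1}{\sqrt{2\pi}}\chi_{[-\pi,\pi]}$ in the relevant sense, since $\|\Phi\|_p^p=(2\pi)^{1-p/2}$. For the wavelet, the natural companion is that $\wh{\psi_m^D}$ converges (in $L_p$) to the indicator of the Shannon annulus, namely $\frac{1}{\sqrt{2\pi}}\chi_{\{\pi\le|\omega|\le2\pi\}}$, whose $p$-norm is $(2\pi)^{1/p-1/2}\cdot 2^{1/p}$; one checks this is exactly the $k=0$ value of \eqref{eq:kPsiD}, since $(1-2^{1-p\cdot0})/(p\cdot0-1)=(1-2)/(-1)=1$ would be the formal substitution, but $k=0$ is a degenerate case and the cleaner route is to read $\|\wh{\psi_m^D}\|_p$ off directly. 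The step I expect to require the most care is justifying this denominator for $\psi_m^D$: the formula \eqref{eq:kPsiD} was stated for $k\in\N$, so I cannot simply plug in $k=0$, and I would instead argue the convergence $\wh{\psi_m^D}\to\frac{1}{\sqrt{2\pi}}\chi_{\{\pi\le|\omega|\le2\pi\}}$ in $L_p$ by the same splitting and uniform-convergence technique used in the proof of Theorem~\ref{thm:CphiD}, using that $H_m(\cdot/2+\pi)$ tends to $\chi_{[\pi,2\pi]}$ (up to reflection) and the tail product tends to $1$ on compacta.

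Having both numerator and denominator limits in hand, I would finish by dividing. For the scaling function, the factors $(2\pi)^{1/p-1/2}$ cancel and leave $\pi^k/(1+pk)^{1/p}$, giving \eqref{eq:CphiD}. For the wavelet, dividing the limit from \eqref{eq:kPsiD} by $(2\pi)^{1/p-1/2}2^{1/p}$ cancels the $(2\pi)^{1/p-1/2}$ prefactors and the factor $2^{1/p}$ combines with the numerator's $(1-2^{1-pk})^{1/p}$ to yield $\pi^{-k}\bigl((1-2^{1-pk})/(pk-1)\bigr)^{1/p}$ after simplifying, establishing \eqref{eq:CpsiD}. The only genuine obstacle is the $k=0$ denominator for the wavelet; everything else is algebraic cancellation of the common constants, and the existence of the two limits as a quotient is immediate once both numerator and denominator have nonzero finite limits.
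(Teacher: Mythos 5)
Your overall route is exactly the one the paper intends (the corollary is stated there without proof, as an immediate consequence of Theorem~\ref{thm:CphiD} and \eqref{eq:kPsiD} divided by the $k=0$ denominators), and the scaling-function half \eqref{eq:CphiD} is handled correctly: $\lim_m\|\wh{\varphi_m^D}\|_p=(2\pi)^{1/p-1/2}$ and the prefactors cancel. However, there is an arithmetic error in your wavelet denominator. The Shannon annulus $\{\pi\le|\omega|\le2\pi\}$ is the union of two intervals each of length $\pi$, so it has Lebesgue measure $2\pi$, and hence
\[
\Big\|\tfrac{1}{\sqrt{2\pi}}\chi_{\{\pi\le|\omega|\le2\pi\}}\Big\|_p^p=(2\pi)^{-p/2}\cdot 2\pi=(2\pi)^{1-p/2},
\]
i.e.\ the $p$-norm is $(2\pi)^{1/p-1/2}$ with \emph{no} extra factor $2^{1/p}$. (Sanity check at $p=2$: orthonormality forces $\|\wh{\psi_m^D}\|_2=1$, which your value $\sqrt{2}$ contradicts.) Compounding this, your final step asserts that the spurious $2^{1/p}$ ``combines with the numerator's $(1-2^{1-pk})^{1/p}$'' to yield $(1-2^{1-pk})^{1/p}$; that simplification is false --- $\bigl((1-2^{1-pk})/2\bigr)^{1/p}\neq(1-2^{1-pk})^{1/p}$ --- so as written your derivation of \eqref{eq:CpsiD} does not go through, and the correct answer emerges only because two errors cancel. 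Once you replace the denominator by $(2\pi)^{1/p-1/2}$, dividing \eqref{eq:kPsiD} by it gives \eqref{eq:CpsiD} immediately. Your remaining remarks --- that \eqref{eq:kPsiD} is stated only for $k\in\N$ so the $k=0$ denominator must be obtained separately by the splitting/uniform-convergence argument of Theorem~\ref{thm:CphiD} applied to $H_m(\cdot/2+\pi)$ and the tail product --- are sound and are indeed the only point requiring any real work.
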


For the semiorthogonal spline wavelet family, by
Theorems~\ref{thm:Cphi} and \ref{thm:Cpsi}, we can deduces the
following result.

\begin{corollary}\label{cor:CPhiPsiS}
Let $\varphi^S_m$ and $\psi^S_m$ be the semiorthogonal spline
wavelet of order $m$, respectively. Let $k\ge0$ be an integer.  Then
\begin{equation}
\label{eq:CphiS}
\lim_{m\rightarrow\infty}C_{k,p}(\varphi_m^S)=\lim_{m\rightarrow\infty}
\frac{\|\wh{_k\varphi_m^S}\|_p}{\|{\wh{\varphi_m^S}}\|_p}=(2\xi_1)^{-k}=(2.331...)^{-k}
\end{equation}
and
\begin{equation}
\label{eq:CpsiS}
\lim_{m\rightarrow\infty}C_{k,p}(\psi_m^S)=\lim_{m\rightarrow\infty}
\frac{\|\wh{_k\psi_m^S}\|_p}{\|{\wh{\psi_m^S}}\|_p}=(2\pi-4\xi_2)^{-k}=(5.1419...)^{-k},
\end{equation}
where $\xi_1=1.1655...,\xi_2=0.2853...$ are constants given in
Theorems~\ref{thm:Cphi} and ~\ref{thm:Cpsi}.
\end{corollary}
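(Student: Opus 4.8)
The plan is to obtain both limits directly from the ratio representation $C_{k,p}(f)=\|\wh{_kf}\|_p/\|\wh f\|_p$ recorded in \eqref{def:Ckp2}, feeding in the sharp asymptotics already established in Theorems~\ref{thm:Cphi} and \ref{thm:Cpsi}. The crucial observation is that the denominator is simply the $k=0$ instance of the very same theorem: since $_0f=f$, putting $k=0$ in Theorem~\ref{thm:Cphi} gives
\[
\|\wh{\varphi_m^S}\|_p=\frac{8^{1/p}}{(\sqrt{2\pi})^{1-1/p}}\cdot\frac{1}{(\sqrt{\Lambda_1 mp})^{1/p}}\cdot\left(\frac{\lambda_1}{\xi_1}\right)^{m/2}\cdot(1+\mathcal{O}(m^{-1/2})),
\]
and putting $k=0$ in Theorem~\ref{thm:Cpsi} gives the companion expression for $\|\wh{\psi_m^S}\|_p$. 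Thus no new computation is required beyond dividing each asymptotic by its own $k=0$ specialization.

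First I would form the quotient for the scaling function. Every $m$-dependent factor in Theorem~\ref{thm:Cphi} --- the absolute constant $8^{1/p}(\sqrt{2\pi})^{1/p-1}$, the algebraic prefactor $(\sqrt{\Lambda_1 mp})^{-1/p}$, and the decaying term $(\lambda_1/\xi_1)^{m/2}$ --- appears identically in numerator and denominator and cancels, so that what survives is exactly the $k$-dependent factor:
\[
C_{k,p}(\varphi_m^S)=(2\xi_1)^{-k}\cdot\frac{1+\mathcal{O}(m^{-1/2})}{1+\mathcal{O}(m^{-1/2})}\longrightarrow(2\xi_1)^{-k},\qquad m\to\infty,
\]
and inserting $\xi_1=1.1655\ldots$ yields $2\xi_1=2.331\ldots$, establishing \eqref{eq:CphiS}. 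The argument for $\psi_m^S$ is verbatim: dividing the asymptotic of $\|\wh{_k\psi_m^S}\|_p$ from Theorem~\ref{thm:Cpsi} by its $k=0$ form cancels the common factors $2^{3/p}(\sqrt{2\pi})^{1/p-1}$, $(\sqrt{2\Lambda_2 mp})^{-1/p}$, and $\lambda_2^{m}$, leaving $(2\pi-4\xi_2)^{-k}$; with $\xi_2=0.2853\ldots$ one computes $2\pi-4\xi_2=5.1419\ldots$, which is \eqref{eq:CpsiS}.

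The one point that deserves care --- and the only thing resembling an obstacle --- is that this is a genuine $0/0$ limit rather than a trivial cancellation. Since the decay rates satisfy $(\lambda_1/\xi_1)^{1/2}=0.78846\ldots<1$ and $\lambda_2=0.69706\ldots<1$, both numerator and denominator tend to $0$ exponentially in $m$, so one may not pass to the limit factor by factor. The resolution is precisely to quotient the two explicit leading-order asymptotics \emph{before} letting $m\to\infty$: the matching exponential and algebraic decay rates guarantee that the ratio of error factors $(1+\mathcal{O}(m^{-1/2}))/(1+\mathcal{O}(m^{-1/2}))$ converges to $1$, which legitimizes reading off the finite constant ratio. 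With this justification no further estimate is needed, and the corollary follows at once from Theorems~\ref{thm:Cphi} and \ref{thm:Cpsi}.
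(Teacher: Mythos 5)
Your proposal is correct and is exactly the paper's (implicit) argument: the corollary is stated as a direct consequence of Theorems~\ref{thm:Cphi} and \ref{thm:Cpsi}, obtained by dividing each asymptotic formula by its own $k=0$ specialization via the ratio representation \eqref{def:Ckp2}, so that all $m$-dependent factors cancel and only $(2\xi_1)^{-k}$, respectively $(2\pi-4\xi_2)^{-k}$, survives. Your remark that one must quotient the explicit leading-order asymptotics before passing to the limit (since both norms decay exponentially) is the right justification and is the only point of substance in the deduction.
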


Comparing Corollarys~\ref{cor:CPhiPsiD} and \ref{cor:CPhiPsiS}, we
obtain that for every $k\in\N\cup\{0\}$, the semiorthogonal spline
wavelets are better than the Daubechies orthonormal wavelets in the
sense of asymptotically smaller constants. More precisely, we have
\begin{corollary}\label{cor:CpsiDandCpsiS}
Let $\psi^D_m$ and $\psi^S_m$ be Daubechies orthonormal wavelet  the
semiorthogonal spline wavelet of order $m$, respectively. Then
\begin{equation}
\label{eq:CpsiS.CpsiD}
\lim_{k\rightarrow\infty}\lim_{m\rightarrow\infty}\left(\frac{C_{k,p}(\psi_m^S)}{C_{k,p}(\psi_m^D)}\right)^{1/k}=\frac{\pi}{2\pi-4\xi_2}=0.61098...
.
\end{equation}
\end{corollary}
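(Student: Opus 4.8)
The plan is to use the explicit asymptotic limits already established in Corollaries~\ref{cor:CPhiPsiD} and \ref{cor:CPhiPsiS} and simply take the ratio of the two wavelet constants, then extract the $k$th root and let $k\to\infty$. Concretely, from \eqref{eq:CpsiD} we have
\[
\lim_{m\rightarrow\infty}C_{k,p}(\psi_m^D)=\pi^{-k}\left(\frac{1-2^{1-pk}}{pk-1}\right)^{1/p},
\]
and from \eqref{eq:CpsiS} we have
\[
\lim_{m\rightarrow\infty}C_{k,p}(\psi_m^S)=(2\pi-4\xi_2)^{-k}.
\]
First I would form the inner limit (as $m\to\infty$) of the ratio, which is legitimate since both factors converge separately and the denominator limit is nonzero for each fixed $k$ with $pk>1$. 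This gives
\[
\lim_{m\rightarrow\infty}\frac{C_{k,p}(\psi_m^S)}{C_{k,p}(\psi_m^D)}
=\frac{(2\pi-4\xi_2)^{-k}}{\pi^{-k}}\cdot\left(\frac{pk-1}{1-2^{1-pk}}\right)^{1/p}
=\left(\frac{\pi}{2\pi-4\xi_2}\right)^{k}\cdot\left(\frac{pk-1}{1-2^{1-pk}}\right)^{1/p}.
\]

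Next I would raise this expression to the power $1/k$ and analyze each piece as $k\to\infty$. The geometric factor contributes
\[
\left[\left(\frac{\pi}{2\pi-4\xi_2}\right)^{k}\right]^{1/k}=\frac{\pi}{2\pi-4\xi_2},
\]
exactly, independent of $k$. The remaining correction factor $\left(\frac{pk-1}{1-2^{1-pk}}\right)^{1/(pk)}$ must be shown to tend to $1$. This is the only genuinely quantitative step: as $k\to\infty$ the denominator $1-2^{1-pk}\to 1$, so the factor behaves like $(pk-1)^{1/(pk)}$, and taking logarithms gives $\frac{1}{pk}\ln(pk-1)\to 0$. Hence the correction factor converges to $1$, and the iterated limit equals $\pi/(2\pi-4\xi_2)$.

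Finally I would substitute the numerical value $\xi_2=0.2853\ldots$ from Theorem~\ref{thm:Cpsi} to confirm $\pi/(2\pi-4\xi_2)=0.61098\ldots$, matching \eqref{eq:CpsiS.CpsiD}. There is no serious obstacle here: the result is essentially a clean consequence of the two preceding corollaries, with the only care needed being the routine verification that the polynomial-in-$k$ correction term vanishes under the $1/k$ root. I would remark that the order of limits matters only insofar as the inner $m$-limit must be taken first to reduce to the explicit closed forms; the outer $k$-limit is then an elementary computation. The factor $\bigl(\tfrac{\pi}{2\pi-4\xi_2}\bigr)<1$ is precisely the statement that the semiorthogonal spline wavelets are asymptotically superior, quantifying the qualitative comparison drawn from Corollaries~\ref{cor:CPhiPsiD} and \ref{cor:CPhiPsiS}.
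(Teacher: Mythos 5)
Your proposal is correct and follows exactly the route the paper intends: the corollary is stated as an immediate consequence of the limits \eqref{eq:CpsiD} and \eqref{eq:CpsiS}, obtained by taking the ratio, extracting the $k$th root so the geometric factor $\bigl(\tfrac{\pi}{2\pi-4\xi_2}\bigr)^k$ yields the constant exactly, and observing that the algebraic correction $\bigl(\tfrac{pk-1}{1-2^{1-pk}}\bigr)^{1/(pk)}\to 1$. Your explicit verification of that last step is the only content the paper leaves unsaid, and it is handled correctly.
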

That is, the semiorthogonal spline wavelet constant $C_k(\psi^S_m)$
is exponentially better than Daubechies orthonormal wavelet constant
$C_k(\psi^D_m)$ for increasing $k$.

Since the number of vanishing moments increases with $m$, it is
natural to consider the behavior of the constants $C_k(\psi^S_m)$
with $k=k(m)=m$. In this situation, from Theorems~\ref{thm:CmpsimD}
and \ref{thm:CmPsimS}, we have the following result, which shows
that for smooth functions, the ration in \eqref{eq:CpsiS.CpsiD} when
$k=m$ is even more in favor of the spline wavelets.
\begin{corollary}\label{cor:CmpsimDandCmpsimS}
Let $\psi^D_m$ and $\psi^S_m$ be Daubechies orthonormal wavelet  the
semiorthogonal spline wavelet of order $m$, respectively. Then
\begin{equation}
\label{eq:Cmpsim}
\lim_{m\rightarrow\infty}\left(C_{m,p}(\psi_m^D)\right)^{1/m}=\frac12,\quad
\lim_{m\rightarrow\infty}\left(C_{m,p}(\psi_m^S)\right)^{1/m}=\frac{16}{\lambda_2\pi^4},
\end{equation}
and
\begin{equation}
\label{eq:CmpsimS.CmpsimD}
\lim_{m\rightarrow\infty}\left(\frac{C_{m,p}(\psi_m^S)}{C_{m,p}(\psi_m^D)}\right)^{1/m}=\frac{32}{\lambda_2\pi^4}=0.47128...
.
\end{equation}
\end{corollary}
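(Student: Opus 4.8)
The plan is to reduce all three limits to the exponential growth rates already isolated in the asymptotic estimates of this section, using the identity $C_{m,p}(\psi)=\|\wh{_m\psi}\|_p/\|\wh\psi\|_p$ from \eqref{def:Ckp2}. Writing $\left(C_{m,p}(\psi)\right)^{1/m}=\|\wh{_m\psi}\|_p^{1/m}\big/\|\wh\psi\|_p^{1/m}$, the entire computation rests on the elementary remark that any factor of polynomial order in $m$ (a fixed positive constant, a power of $m$, or a $1+\mathcal{O}(m^{-1/2})$ term) contributes a factor tending to $1$ after extraction of the $m$th root; only the genuinely exponential factors $\lambda^{m}$ persist in the limit of $(\cdot)^{1/m}$.

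First I would dispose of the numerators. By Theorem~\ref{thm:CmpsimD}, $\|\wh{_m\psi_m^D}\|_p$ equals $2^{-m}$ times the factor $C\cdot\frac{2^{1/p}}{\sqrt{2\pi}}\cdot A(m)/(\sqrt{mp/2})^{1/p}$, in which the constant and the $\sqrt{m}$ powers are of polynomial order, and in which $A(m)$ is trapped between $\sqrt{c_m/(2m)}$ and $\sqrt{1/2}$; since $c_m\sim\sqrt{m/\pi}$, both bounds are of polynomial order in $m$, so $A(m)^{1/m}\to1$. Hence $\|\wh{_m\psi_m^D}\|_p^{1/m}\to\frac12$. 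Likewise Theorem~\ref{thm:CmPsimS} presents $\|\wh{_m\psi_m^S}\|_p$ as $(16/\pi^4)^m$ multiplied by purely polynomial and constant factors, so $\|\wh{_m\psi_m^S}\|_p^{1/m}\to\frac{16}{\pi^4}$.

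Next I would treat the denominators. For the spline family this is immediate: setting $k=0$ in Theorem~\ref{thm:Cpsi} gives $\|\wh{\psi_m^S}\|_p=\frac{2^{3/p}}{(\sqrt{2\pi})^{1-1/p}}(\sqrt{2\Lambda_2mp})^{-1/p}\lambda_2^m(1+\mathcal{O}(m^{-1/2}))$, whence $\|\wh{\psi_m^S}\|_p^{1/m}\to\lambda_2$; combined with the numerator this yields $\left(C_{m,p}(\psi_m^S)\right)^{1/m}\to\frac{16}{\lambda_2\pi^4}$, the second assertion of \eqref{eq:Cmpsim}. The Daubechies denominator needs a short separate argument, since \eqref{eq:kPsiD} is stated only for $k\in\N$: I would show that $\|\wh{\psi_m^D}\|_p$ neither blows up nor decays exponentially, so that $\|\wh{\psi_m^D}\|_p^{1/m}\to1$. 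Indeed, following the same approximation scheme as in the proof of Theorem~\ref{thm:CphiD}, now with the Shannon wavelet $\frac{1}{\sqrt{2\pi}}\chi_{\{\pi\le|\omega|\le2\pi\}}$ in place of $\Phi$, one obtains $\wh{\psi_m^D}\to\frac{1}{\sqrt{2\pi}}\chi_{\{\pi\le|\omega|\le2\pi\}}$ in $L_p$, hence $\|\wh{\psi_m^D}\|_p\to(2\pi)^{1/p-1/2}$, a finite nonzero number; thus $\left(C_{m,p}(\psi_m^D)\right)^{1/m}\to\frac12$, the first assertion.

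Finally, the third assertion \eqref{eq:CmpsimS.CmpsimD} follows by dividing the two preceding limits, $\frac{16/(\lambda_2\pi^4)}{1/2}=\frac{32}{\lambda_2\pi^4}$, whose numerical value is $0.47128\ldots$. The only genuinely nontrivial point is the Daubechies denominator: one must guarantee that $\|\wh{\psi_m^D}\|_p$ stays bounded above and below by positive constants. I expect this to be the main obstacle, but it is controlled by the uniform bound $|\wh{\psi_m^D}|\le1/\sqrt{2\pi}$ together with the concentration of mass on $\{\pi\le|\omega|\le2\pi\}$ inherited from the limiting ideal filter, exactly as in the proof of Theorem~\ref{thm:CphiD}; everything else reduces to the fact that polynomial-order factors disappear under the $m$th root.
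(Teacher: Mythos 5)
Your proposal is correct and follows essentially the same route as the paper, which states the corollary as an immediate consequence of Theorems~\ref{thm:CmpsimD} and \ref{thm:CmPsimS} (together with Theorem~\ref{thm:Cpsi} at $k=0$), the point being exactly that all polynomial-order factors disappear under the $m$th root and only the exponential rates $2^{-m}$, $(16/\pi^4)^m$ and $\lambda_2^m$ survive. Your explicit verification that $\|\wh{\psi_m^D}\|_p$ tends to the finite nonzero limit $(2\pi)^{1/p-1/2}$ (needed because \eqref{eq:kPsiD} is stated only for $k\in\N$) is a detail the paper leaves implicit but already presupposes in Corollary~\ref{cor:CPhiPsiD}, and your argument for it mirrors the proof of Theorem~\ref{thm:CphiD}.
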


In order to study the high dimensional tensor product wavelets, we
need to compare the asymptotic behaviors  between the scaling
function $\varphi$ and the wavelet function $\psi$ for both the
Daubechies orthonormal wavelets and seim-orthonormal spline
wavelets.

For the Daubechies orthonormal wavelets, again, by
Theorems~\ref{thm:CphiD} and \eqref{eq:kPsiD}, we have the following
result.
\begin{corollary}\label{cor:CphiDCpsiDRatio}
Let $\varphi^D_m$ and $\psi^D_m$ be the Daubechies orthonormal
scaling function and wavelet function of order $m$, respectively.
Let $k_1,k_2\ge0$ be nonnegative integers.  Then
\begin{equation}
\label{eq:CphiDCPsiDRatio}
\lim_{m\rightarrow\infty}\frac{\|\wh{_{-k_1}\varphi_m^D}\|_p}{\|{\wh{_{k_2}\psi_m^D}}\|_p}
=\frac{\pi^{k_1+k_2}}{(1-2^{1-pk_2})^{1/p}}\left(\frac{pk_1+1}{pk_2-1}\right)^{1/p}.
\end{equation}
\end{corollary}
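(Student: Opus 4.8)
The plan is to read off the desired limit directly from the two asymptotic identities already established, since the quantity in \eqref{eq:CphiDCPsiDRatio} is nothing but the quotient of the limiting values of its numerator and denominator. First I would record, from Theorem~\ref{thm:CphiD} (applied with $k=k_1$) and from \eqref{eq:kPsiD} (applied with $k=k_2$), the two limits
\[
\lim_{m\rightarrow\infty}\|\wh{_{-k_1}\varphi^D_m}\|_p=\pi^{k_1}\frac{(2\pi)^{1/p-1/2}}{(1+pk_1)^{1/p}},\qquad \lim_{m\rightarrow\infty}\|\wh{_{k_2}\psi^D_m}\|_p=\frac{(2\pi)^{1/p-1/2}}{\pi^{k_2}}\left(\frac{1-2^{1-pk_2}}{pk_2-1}\right)^{1/p}.
\]
Both limits exist and are finite, and the second is strictly positive for every integer $k_2\ge 0$, since the ratio $(1-2^{1-pk_2})/(pk_2-1)$ is strictly positive throughout this range. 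Because the denominator limit is nonzero, the limit of the quotient equals the quotient of the limits, and it remains only to simplify.

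Next I would form the quotient of the two right-hand sides. The key observation is that the common prefactor $(2\pi)^{1/p-1/2}$ appears in both and cancels identically. What survives organizes into three blocks: the powers of $\pi$, which combine (noting the positive exponent $k_1$ coming from $_{-k_1}\varphi^D_m$ and the negative exponent $-k_2$ coming from $_{k_2}\psi^D_m$) into the single factor $\pi^{k_1+k_2}$; the normalization factor $(1-2^{1-pk_2})^{-1/p}$ inherited from the wavelet limit; and the rational block assembled from $1+pk_1$ and $pk_2-1$, each raised to the power $1/p$. Collecting these three blocks gives precisely \eqref{eq:CphiDCPsiDRatio}.

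There is no genuine analytic difficulty here: all the content is carried by Theorem~\ref{thm:CphiD} and \eqref{eq:kPsiD}, and the argument reduces to a single substitution followed by algebraic bookkeeping. The only two points demanding a moment's care are the verification that the denominator limit does not vanish---so that passing from the limit of a ratio to the ratio of limits is legitimate---and the tracking of the signs of the exponents on $\pi$, which is exactly what yields the total power $\pi^{k_1+k_2}$ rather than a partial cancellation.
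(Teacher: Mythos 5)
Your route is exactly the paper's: the paper offers no argument beyond ``by Theorem~\ref{thm:CphiD} and \eqref{eq:kPsiD}'', i.e.\ substitute the two limits and divide, which is what you propose, and your check that the denominator limit is nonzero (so that the limit of the quotient is the quotient of the limits) is the right thing to verify.

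The problem is the one step you do not actually carry out, namely the claim that collecting the blocks ``gives precisely \eqref{eq:CphiDCPsiDRatio}''. Writing out the division: the numerator limit is $\pi^{k_1}(2\pi)^{1/p-1/2}(1+pk_1)^{-1/p}$ and the denominator limit is $\pi^{-k_2}(2\pi)^{1/p-1/2}\bigl(\frac{1-2^{1-pk_2}}{pk_2-1}\bigr)^{1/p}$, so the quotient is
\begin{equation*}
\frac{\pi^{k_1+k_2}}{(1-2^{1-pk_2})^{1/p}}\left(\frac{pk_2-1}{pk_1+1}\right)^{1/p},
\end{equation*}
in which the rational block is the \emph{reciprocal} of the one displayed in \eqref{eq:CphiDCPsiDRatio}. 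These agree only when $(pk_2-1)^2=(pk_1+1)^2$, so either the corollary as printed has that fraction inverted or your bookkeeping is off; in either case you cannot assert the match without exhibiting the division, since that is the entire content of the proof. You should also note that \eqref{eq:kPsiD} is stated for $k\in\N$, so the case $k_2=0$ (where both $pk_2-1$ and $1-2^{1-pk_2}$ are negative and their quotient is $1$) needs a separate sentence rather than being absorbed into ``every integer $k_2\ge0$''.
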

For the semiorthogonal wavelets, similarly, using the results of
Theorems~\ref{thm:Cphi} and \ref{thm:Cpsi}, we have
\begin{corollary}\label{cor:CphiDCpsiDRatio}
Let $\varphi^S_m$ and $\psi^S_m$ be the semiorthogonal spline
scaling function and wavelet function of order $m$, respectively.
Let $k_1,k_2\ge0$ be nonnegative integers.  Then
\begin{equation}
\label{eq:CphiSCPsiSRatio}
\lim_{m\rightarrow\infty}\left(\frac{\|\wh{_{k_1}\varphi_m^S}\|_p}{\|{\wh{_{k_2}\psi_m^S}}\|_p}\right)^{1/m}
=\sqrt{\frac{\lambda_1}{\xi_1\lambda_2^2}}=1.1311....
\end{equation}
\end{corollary}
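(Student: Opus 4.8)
The plan is to read off the two asymptotic expansions already proved in Theorem~\ref{thm:Cphi} and Theorem~\ref{thm:Cpsi}, substitute them into the quotient, and then extract the $m$-th root in the limit. Concretely, I would first record, for the scaling function and the wavelet,
\[
\|\wh{_{k_1}\varphi_m^S}\|_p=\frac{8^{1/p}}{(\sqrt{2\pi})^{1-1/p}}\cdot\frac{(2\xi_1)^{-k_1}}{(\sqrt{\Lambda_1mp})^{1/p}}\cdot(\lambda_1/\xi_1)^{m/2}\cdot(1+\mathcal{O}(m^{-1/2})),
\]
\[
\|\wh{_{k_2}\psi_m^S}\|_p=\frac{2^{3/p}}{(\sqrt{2\pi})^{1-1/p}}\cdot\frac{(2\pi-4\xi_2)^{-k_2}}{(\sqrt{2\Lambda_2mp})^{1/p}}\cdot\lambda_2^m\cdot(1+\mathcal{O}(m^{-1/2})).
\]

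Forming the quotient, the common factor $(\sqrt{2\pi})^{1/p-1}$ cancels, and since $8^{1/p}=2^{3/p}$ the leading constants cancel as well, leaving only the ratio of the two square-root prefactors, which collapses to the $m$-independent constant $(2\Lambda_2/\Lambda_1)^{1/(2p)}$. Thus I would arrive at
\[
\frac{\|\wh{_{k_1}\varphi_m^S}\|_p}{\|\wh{_{k_2}\psi_m^S}\|_p}=(2\xi_1)^{-k_1}(2\pi-4\xi_2)^{k_2}\left(\frac{2\Lambda_2}{\Lambda_1}\right)^{1/(2p)}\cdot\frac{(\lambda_1/\xi_1)^{m/2}}{\lambda_2^m}\cdot(1+\mathcal{O}(m^{-1/2})).
\]
Raising to the power $1/m$ and letting $m\to\infty$, the decisive observation is that every factor not carrying the exponent $m$ contributes a limit of $1$: the $k_1,k_2$-dependent constants, the ratio $(2\Lambda_2/\Lambda_1)^{1/(2p)}$, and the error term are each positive and bounded away from $0$ and $\infty$, so their $m$-th roots tend to $1$. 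Only the genuinely exponential factor survives, giving
\[
\lim_{m\to\infty}\left(\frac{(\lambda_1/\xi_1)^{m/2}}{\lambda_2^m}\right)^{1/m}=\frac{(\lambda_1/\xi_1)^{1/2}}{\lambda_2}=\sqrt{\frac{\lambda_1}{\xi_1\lambda_2^2}},
\]
and inserting $\lambda_1=0.72461\ldots$, $\xi_1=1.1655\ldots$, $\lambda_2=0.69706\ldots$ yields $1.1311\ldots$, as claimed.

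There is no genuine analytic obstacle here; the substance of the statement is entirely contained in Theorem~\ref{thm:Cphi} and Theorem~\ref{thm:Cpsi}, and the corollary is a bookkeeping computation. The one point I would treat with a line of care is the error factor: I must note that $(1+\mathcal{O}(m^{-1/2}))^{1/m}\to1$, which holds because the bracketed quantity lies in a fixed compact subinterval of $(0,\infty)$ for all large $m$. The feature worth flagging — in contrast to the fixed-$k$ ratios of Corollary~\ref{cor:CPhiPsiS} — is that the dependence on $k_1$ and $k_2$ disappears completely once one passes to the $m$-th root, since the $k_i$-factors are constant in $m$ and therefore contribute only subexponentially; the limit is governed solely by the exponential growth rates $(\lambda_1/\xi_1)^{1/2}$ and $\lambda_2$.
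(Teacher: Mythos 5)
Your proposal is correct and is exactly the argument the paper intends: the corollary is stated as an immediate consequence of Theorems~\ref{thm:Cphi} and \ref{thm:Cpsi} with no further proof given, and your substitution of the two asymptotic expansions, cancellation of the $m$-independent prefactors, and extraction of the surviving exponential rate $(\lambda_1/\xi_1)^{1/2}/\lambda_2=\sqrt{\lambda_1/(\xi_1\lambda_2^2)}$ is the intended bookkeeping. The numerical value $1.1311\ldots$ also checks out.
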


%
%
%
%
%
%
%
%

\section{High-dimensional Wavelet coeifficients}

One of the simplest way to construct high-dimensional wavelets is
using tensor product. In this section, we discuss the wavelet
coefficients for high-dimensional tensor product wavelets. We shall
mainly focus on dimension two while results of high dimensions can
be similarly obtained due to the properties of tensor product.

Let $\varphi,\psi$ be the refinable function and wavelet function
that generates an wavelet basis in $L_2(\R)$. Then, in
two-dimensional case, the refinable  function
$\fPhi(x_1,x_2)=\varphi(x_1)\varphi(x_2)$ and we have three wavelets
instead of one,
\begin{equation}\label{def:WaveletHighD}
\begin{array}{ll}
{\fPsi}^{1}(x_1,x_2):=\psi(x_1)\varphi(x_2),\\
{\fPsi}^{2}(x_1,x_2):=\varphi(x_1)\psi(x_2),\\
{\fPsi}^{3}(x_1,x_2):=\psi(x_1)\psi(x_2).
\end{array}
\end{equation}

Let $k=(k_1,k_2)\in\Z^2$ be a two-dimensional index. Then, for a
two-dimensional wavelet function $\fPsi$, we can define
$C_{k,p}(\fPsi)$ similar to \eqref{def:Ckp} by
\begin{equation}\label{def:CkpHighD}
C_{k,p}(\fPsi)=\sup_{f\in \mathcal{A}_k^{p'}}\frac{|\la
f,\fPsi\ra|}{\|\wh\fPsi\|_p}=\frac{\|\wh{_k\fPsi}\|_p}{\|\wh{\fPsi}\|_p},
\end{equation}
where $1< p,p'< \infty$, $1/p'+1/p=1$ and
$\mathcal{A}_k^{p'}:=\{f\in L_{p'}(\R^2): \|(\iu
\omega)^k\wh{f}(\omega)\|_{p'}\le 1\}$. Here, for
$x=(x_1,x_2)\in\R^2, k=(k_1,k_2)\in\Z^2$, $x^k:=x_1^{k_1}x_2^{k_2}$.
And for a function $f\in L_1(\R^2)$, $_kf$ is defined to be a
function such that $\wh{_kf}(\omega)=(i\omega)^k\wh{f}$, where
$\omega=(\omega_1,\omega_2)\in\R^2$. In particular, when
$\fPsi(x_1,x_2)=\psi_1(x_1)\psi_2(x_2)$, one can easily show that
$C_{k,p}(\fPsi)=C_{k_1,p}(\psi_1)C_{k_2,p}(\psi_2)$.

In two-dimensional case, the semiorthogonal wavelets can be
represented by
\begin{equation}\label{def:2DsplineWavelets}
\begin{array}{ll}
{\fPsi}^{S,1}_{m}(x_1,x_2):=\psi_m^S(x_1)\varphi_m^S(x_2)=\psi_m^S(x_1)N_m(x_2),\\
{\fPsi}^{S,2}_{m}(x_1,x_2):=\varphi_m^S(x_1)\psi_m^S(x_2)=N_m(x_1)\psi_m^S(x_2),\\
{\fPsi}^{S,3}_{m}(x_1,x_2):=\psi_m^S(x_1)\psi_m^S(x_2).
\end{array}
\end{equation}
We can obtain that following corollaries using results in previous
sections and the properties of tensor product.
\begin{corollary}\label{cor:2DSpline}
Let ${\fPsi}^{S,1}_{m}$, ${\fPsi}^{S,2}_{m}$, and
${\fPsi}^{S,3}_{m}$ be defined in \eqref{def:2DsplineWavelets}. Let
$k=(k_1,k_2)\in\N^2$. Then,
\begin{equation}
\begin{aligned}
C_{k,p}({\fPsi}^{S,1}_{m})&\ge
\frac{1}{2^{k_1}}\left(\frac{1}{\pi}\right)^{k_1+k_2}\frac{\sqrt{K_{2(m+k_1)+1}K_{2(m+k_2)+1}}}{K_{2m+1}},
\\C_{k,p}({\fPsi}^{S,2}_{m})&\ge
\frac{1}{2^{k_2}}\left(\frac{1}{\pi}\right)^{k_1+k_2}\frac{\sqrt{K_{2(m+k_1)+1}K_{2(m+k_2)+1}}}{K_{2m+1}},
\\C_{k,p}({\fPsi}^{S,3}_{m})&\ge
\left(\frac{1}{2\pi}\right)^{k_1+k_2}\frac{\sqrt{K_{2(m+k_1)+1}K_{2(m+k_2)+1}}}{K_{2m+1}},
\end{aligned}
\end{equation}
where $K_j$'s are the Favard's constants. And
\begin{equation}\label{def:2DDaub}
\begin{aligned}
\lim_{m\rightarrow\infty}C_{k,p}({\fPsi}^{S,1}_{m})&=(2\pi-4\xi_2)^{-k_1}(2\xi_1)^{-k_2},\\
\lim_{m\rightarrow\infty}C_{k,p}({\fPsi}^{S,2}_{m})&=(2\pi-4\xi_2)^{-k_2}(2\xi_1)^{-k_1},\\
\lim_{m\rightarrow\infty}C_{k,p}({\fPsi}^{S,3}_{m})&=(2\pi-4\xi_2)^{-k_1-k_2},
\end{aligned}
\end{equation}
where $\xi_1,\xi_2$ are constants in Corollary~\ref{cor:CPhiPsiS}.
\end{corollary}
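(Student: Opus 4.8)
The plan is to reduce the entire corollary to the one-dimensional results already established, using the multiplicativity $C_{k,p}(\fPsi)=C_{k_1,p}(\psi_1)C_{k_2,p}(\psi_2)$ recorded above for product wavelets $\fPsi(x_1,x_2)=\psi_1(x_1)\psi_2(x_2)$. First I would confirm this factorization in the three cases at hand: since $\wh{\fPsi^{S,j}_m}$ is a product of a function of $\omega_1$ and a function of $\omega_2$, and the weight $(\iu\omega)^k=(\iu\omega_1)^{k_1}(\iu\omega_2)^{k_2}$ factors likewise, Fubini splits each $\|\cdot\|_{L_p(\R^2)}$ norm appearing in \eqref{def:CkpHighD} into a product of two $L_p(\R)$ norms. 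Hence $C_{k,p}(\fPsi^{S,1}_m)=C_{k_1,p}(\psi_m^S)\,C_{k_2,p}(\varphi_m^S)$, and similarly for $\fPsi^{S,2}_m$ and $\fPsi^{S,3}_m$ with the roles of the two tensor factors permuted.

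For the lower bounds I need two one-dimensional inputs. The wavelet factor is supplied directly by Corollary~\ref{cor:splineWaveFixmC}, which gives $C_{k,p}(\psi_m^S)\ge(\tfrac{1}{2\pi})^{k}\bigl(\tfrac{K_{2(m+k)+1}}{K_{2m+1}}\bigr)^{1/2}$. For the scaling factor I would establish the companion bound $C_{k,p}(\varphi_m^S)\ge\pi^{-k}\bigl(\tfrac{K_{2(m+k)+1}}{K_{2m+1}}\bigr)^{1/2}$: because $\varphi_m^S=N_m$ is a spline of order $m$ on the integer grid, ${}_k\varphi_m^S$ is a spline of order $m+k$ in $S_{m+k,1}$, so applying Theorem~\ref{thm1:spline} with $h=1$ to ${}_k\varphi_m^S$ exactly as in the proof of Corollary~\ref{cor:splineWaveFixmC} — but \emph{without} the dyadic rescaling $x\mapsto x/2$ that there contributed the extra factor $2^{-k}$, since $N_m$ already sits on the coarse grid — yields this bound. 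Multiplying the wavelet and scaling bounds according to which tensor factor is which then reproduces the three displayed inequalities; the factors $2^{-k_1}$, $2^{-k_2}$, $2^{-(k_1+k_2)}$ in the denominators simply record how many of the two factors are the finer-grid wavelet $\psi_m^S$.

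For the limits I would invoke Corollary~\ref{cor:CPhiPsiS}, namely $\lim_{m\to\infty}C_{k,p}(\varphi_m^S)=(2\xi_1)^{-k}$ and $\lim_{m\to\infty}C_{k,p}(\psi_m^S)=(2\pi-4\xi_2)^{-k}$. Because each $C_{k,p}(\fPsi^{S,j}_m)$ is a product of two such one-dimensional quantities, passing to the limit in the product gives the product of the limits, which yields $(2\pi-4\xi_2)^{-k_1}(2\xi_1)^{-k_2}$, $(2\xi_1)^{-k_1}(2\pi-4\xi_2)^{-k_2}$, and $(2\pi-4\xi_2)^{-(k_1+k_2)}$ respectively, matching \eqref{def:2DDaub}. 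The only step requiring genuine care is the scaling-function lower bound: I must track the knot spacing (integer knots for $N_m$ versus half-integer knots for $\psi_m^S$) so that the power of $\pi$ and the Favard indices come out exactly as claimed. Once that bound is in place, the remainder of the corollary is pure bookkeeping resting on the multiplicativity of $C_{k,p}$ under tensor products.
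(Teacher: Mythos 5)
Your proposal is correct and follows essentially the route the paper intends: the paper gives no explicit proof, saying only that the corollary follows from the earlier one-dimensional results together with the tensor-product multiplicativity $C_{k,p}(\fPsi)=C_{k_1,p}(\psi_1)C_{k_2,p}(\psi_2)$, which is exactly your reduction. Your companion bound $C_{k,p}(\varphi_m^S)\ge\pi^{-k}\bigl(K_{2(m+k)+1}/K_{2m+1}\bigr)^{1/2}$, obtained by applying Theorem~\ref{thm1:spline} to ${}_k N_m\in S_{m+k,1}$ without the dyadic rescaling, correctly accounts for the factors $2^{-k_1}$, $2^{-k_2}$, $2^{-(k_1+k_2)}$, and the limits follow from Corollary~\ref{cor:CPhiPsiS} exactly as you say.
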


In two-dimensional case, Daubechies wavelets can be represented by
\[
\begin{array}{ll}
{\fPsi}^{D,1}_{m}(x_1,x_2):=\psi_m^D(x_1)\varphi_m^D(x_2),\\
{\fPsi}^{D,2}_{m}(x_1,x_2):=\varphi_m^D(x_1)\psi_m^D(x_2),\\
{\fPsi}^{D,3}_{m}(x_1,x_2):=\psi_m^D(x_1)\psi_m^D(x_2).
\end{array}
\]Similarly, , we have the
following result.
\begin{corollary}\label{cor:2DDaub}
Let ${\fPsi}^{D,1}_{m}$, ${\fPsi}^{D,2}_{m}$, and
${\fPsi}^{D,3}_{m}$ be defined in \eqref{def:2DDaub}. Then,
\begin{equation}
\begin{aligned}
\lim_{m\rightarrow\infty}C_{k,p}({\fPsi}^{D,1}_{m})&=\pi^{-k_1}\left(\frac{1-2^{1-pk_1}}{pk_1-1}\right)^{1/p}\frac{\pi^{-k_2}}{(1-pk_2)^{1/p}}, \,\,k_2\le 1/p, k_1\in\N,\\
\lim_{m\rightarrow\infty}C_{k,p}({\fPsi}^{D,2}_{m})&=\pi^{-k_2}\left(\frac{1-2^{1-pk_2}}{pk_2-1}\right)^{1/p}\frac{\pi^{-k_1}}{(1-pk_1)^{1/p}},\,\,k_1\le 1/p, k_2\in\N,\\
\lim_{m\rightarrow\infty}C_{k,p}({\fPsi}^{D,3}_{m})&=\pi^{-k_1-k_2}\left(\frac{1-2^{1-pk_1}}{pk_1-1}\cdot\frac{1-2^{1-pk_2}}{pk_2-1}\right)^{1/p},\,\,
(k_1,k_2)\in\N^2.\\
\end{aligned}
\end{equation}
\end{corollary}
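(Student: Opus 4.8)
The plan is to reduce the two-dimensional statement entirely to the one-dimensional asymptotics already in hand, exploiting the exact tensor-product factorization of $C_{k,p}$ noted after \eqref{def:CkpHighD}: whenever $\fPsi(x_1,x_2)=\psi_1(x_1)\psi_2(x_2)$ one has, \emph{for each fixed} $m$,
\[
C_{k,p}(\fPsi)=C_{k_1,p}(\psi_1)\,C_{k_2,p}(\psi_2).
\]
This identity is exact for finite $m$, since $\wh\fPsi(\omega_1,\omega_2)=\wh{\psi_1}(\omega_1)\wh{\psi_2}(\omega_2)$ and hence, by \eqref{def:kf} and Fubini, both $\|\wh{_k\fPsi}\|_p$ and $\|\wh\fPsi\|_p$ split as products of one-dimensional $L_p$-norms. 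Thus no genuinely two-dimensional analysis is required: I can pass to the limit $m\to\infty$ factor by factor using the ordinary product rule for limits, as soon as each one-dimensional limit is known to exist and be finite.

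First I would dispose of $\fPsi^{D,3}_m(x_1,x_2)=\psi^D_m(x_1)\psi^D_m(x_2)$. Here both factors equal $C_{k_i,p}(\psi^D_m)$, whose limit is \eqref{eq:CpsiD} of Corollary~\ref{cor:CPhiPsiD} (equivalently \eqref{eq:kPsiD}); since $k_1,k_2\in\N$ and $p>1$ we have $pk_i>1$, so each limit is finite, and multiplying them and merging the two $1/p$-th powers gives precisely $\pi^{-k_1-k_2}\bigl(\tfrac{1-2^{1-pk_1}}{pk_1-1}\cdot\tfrac{1-2^{1-pk_2}}{pk_2-1}\bigr)^{1/p}$. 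For the mixed wavelets $\fPsi^{D,1}_m=\psi^D_m(x_1)\varphi^D_m(x_2)$ and $\fPsi^{D,2}_m=\varphi^D_m(x_1)\psi^D_m(x_2)$ the wavelet factor is handled identically, while the scaling-function factor is obtained by dividing \eqref{eq:alphaCphiD}, taken at the real index $\alpha=k_i$, by $\lim_m\|\wh{\varphi^D_m}\|_p=(2\pi)^{1/p-1/2}$ (itself the $\alpha=0$ instance of \eqref{eq:alphaCphiD}); this yields $\lim_m C_{k_i,p}(\varphi^D_m)=\pi^{-k_i}/(1-pk_i)^{1/p}$, and the product produces the two displayed formulas.

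The one delicate point --- and the source of the hypotheses $k_2\le 1/p$ and $k_1\le 1/p$ in the mixed cases --- is the finiteness of the scaling-function factor. Because $\wh{\varphi^D_m}(0)=\tfrac{1}{\sqrt{2\pi}}\neq0$, the function $(\iu\omega)^{-k_i}\wh{\varphi^D_m}(\omega)$ is comparable to a nonzero multiple of $|\omega|^{-k_i}$ near the origin, so $\|\wh{_{k_i}\varphi^D_m}\|_p$ is finite only when $pk_i<1$, that is $k_i<1/p$; this is exactly the range $1-p\alpha>0$ for which \eqref{eq:alphaCphiD} is asserted. Consequently $k_i$ must be permitted to be a real parameter rather than an integer, and the integer statement of Theorem~\ref{thm:CphiD} does not suffice on its own --- the real-index refinement \eqref{eq:alphaCphiD} is what is actually needed. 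Beyond checking this constraint, the argument is mechanical: every one-dimensional limit exists and is finite under the stated hypotheses, the factorizations hold identically in $m$, and so the product rule applies with no double-limit interchange to justify.
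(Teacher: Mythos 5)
Your proposal is correct and follows exactly the route the paper intends (the paper states this corollary without proof, relying on the tensor-product identity $C_{k,p}(\fPsi)=C_{k_1,p}(\psi_1)C_{k_2,p}(\psi_2)$ together with the one-dimensional limits \eqref{eq:kPsiD} and \eqref{eq:alphaCphiD}). Your observation that the scaling-function factor forces the strict inequality $k_i<1/p$ (so the paper's ``$k_i\le 1/p$'' should be read as $1-pk_i>0$, and the real-index version \eqref{eq:alphaCphiD} rather than Theorem~\ref{thm:CphiD} is what is actually invoked) is an accurate refinement of the stated hypotheses.
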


\end{document}